\documentclass[12pt]{amsart}
%
\usepackage{amsmath,amssymb}
\usepackage{amsthm}
\usepackage{bm}
\usepackage{ascmac}
\usepackage[T1]{fontenc}

\theoremstyle{definition}
\newtheorem{dfn}{Definition}[section]
\newtheorem{thm}[dfn]{Theorem}
\newtheorem{prop}[dfn]{Proposition}
\newtheorem{lem}[dfn]{Lemma}
\newtheorem{ex}[dfn]{Example}

\newtheorem{rem}[dfn]{Remark}

\numberwithin{equation}{section}
\begin{document}
\title{Stability of viscosity solutions on expanding networks}
\author{Shimpei Makida}
\thanks{Department of Mathematics, Graduate School of Science, Hokkaido University, North 10, West 8, Kita-Ku, Sapporo 060-0810, JAPAN}
\email{makida.shimpei.k3@elms.hokudai.ac.jp}
\subjclass{35D40, 35R02, 49L25}
\keywords{Stability, Hausdorff convergence, Viscosity solutions, Hamilton-Jacobi equation}
\begin{abstract}
In this paper, we prove the stability of viscosity solutions of the Hamilton--Jacobi equations for a sequence of networks embedded in Euclidean space. The network considered in this paper is not merely a graph---it comprises a collection of line segments. We investigate the conditions under which the stability of viscosity solutions holds if the sequence of networks converges to some compact set in the Hausdorff sense. As a corollary, a characterization of the limit of a sequence of networks on which viscosity solutions can be considered, is obtained. In consideration of this problem, we adopt the concept of viscosity solutions as presented in the sense of Gangbo and \'{S}wi\k{e}ch. 
\end{abstract}
\maketitle

\section{Introduction}
In this paper, we consider Hamilton--Jacobi equations over a sequence of spaces and the corresponding Hamilton--Jacobi equation over the Hausdorff limit of the sequence of spaces.
Let $\{\mathcal{N}^n\}_{n \in \mathbf{N}_{\ge 0}}$ be
expanding networks embedded in $\mathbf{R}^d$ (see Section 2 for the definition) and $\mathcal{N}$ be their limit space (i.e., $\mathcal{N}$ satisfies condition (1) of Theorem \ref{main}).
Let $H$ be a quadratic Hamiltonian of the form
\begin{gather}H:\mathcal{N}\times [0,\infty) \to \mathbf{R},\quad
H(x,p)=\frac{1}{2}|p|^2+V(x) \label{Hamilt}
\end{gather}
with $V \in C(\mathcal{N})$.
We consider a Hamilton--Jacobi equation on $\mathcal{N}$ of the following form:
\begin{gather}
\partial_{t}u(t,x)+H(x,|\nabla u(t,x)|)=0,\,(t,x)\in (0,T)\times \mathcal{N},\label{Shj1} \\
u(0,x)=g(x),\,x \in \mathcal{N}, \label{Shj2}
\end{gather}
where $g: \mathcal{N} \to \mathbf{R}$ is a Lipschitz continuous function.
In contrast, 
we consider a corresponding Hamilton--Jacobi equation on $\mathcal{N}^n$:
\begin{gather}
\partial_{t}u(t,x)+H_{n}(x,|\nabla u(t,x)|)=0,\,(t,x)\in (0,T)\times \mathcal{N}^n, \label{Phj1}\\
u(0,x)=g_{n}(x),\,x \in \mathcal{N}^n,\label{Phj2}
\end{gather}
where $H_{n}=H|_{\mathcal{N}^n}, g_{n}=g|_{\mathcal{N}^n}$.
The purpose of this study was to investigate whether the limit function of  solutions, $u_{n}$, of (\ref{Shj1}), (\ref{Shj2}) is the solution of (\ref{Phj1}), (\ref{Phj2}). Note that it is sufficient for us to consider the case of the Hamiltonian in (\ref{Hamilt}) because its uniqueness allows us to discuss stability similarly. This problem is classified as an investigation of the stability of viscosity solutions with respect to space. The physical motivation lies in the numerical computation of physical phenomena in fractal-like domains by network approximation.

The stability of solutions with respect to space has been studied extensively. In this paper, we focus on fully nonlinear partial differential equations. In \cite{AT2015}, the authors investigated the discounted Hamilton--Jacobi equation for a junction as a limit of its uniformly fattened spaces. In \cite{CCM2016}, the authors investigated the eikonal equation for the Sierpinski gasket as a limit of its prefractals.
In the former paper, the authors demonstrated that the viscosity solutions on fattened spaces converge to the viscosity solution of the effective equation on the junction (i.e., the limiting space) by expressing the viscosity solution of the discounted Hamilton--Jacobi equation in terms of the minimum of the value function obtained via control using ordinary differential equations.
The latter paper demonstrated that the unique viscosity solutions given by the forms of the value functions of the eikonal equation on the graphs converge to the viscosity solution of the eikonal equation on the Sierpinski gasket. The convergence property follows from the monotonicity of the value function.

In this paper, we focus on complicated spaces such as fractals.
In particular, we consider spaces that can be approximated by topological networks, for example, a fractal characterized by contraction maps (see the definition of iterated function system in \cite{F1997,F2014}).
We deal with networks embedded in Euclidean space. A topological network is defined as a merged set obtained by combining the vertex and edge of a graph.
It would be interesting to study equivalent conditions for a space to be approximated by a sequence of graphs, but it is not considered in this paper.
Viscosity solutions of Hamilton--Jacobi equations on networks have been studied by various authors. We refer to \cite{ACCT2013,CS2013,IMZ2013} and the related papers.
In this context, as aforementioned, the problem is reformulated with respect to sequences of networks.

The problem can be considered via two major approaches.
First, the convergence of the value function can be investigated directly (cf. \cite{ACCT2013})and second, the convergence can be shown by a PDE method such as Ascoli-Arzel\`{a} (cf. \cite{CCM2016}). We adopt the PDE approach because it clarifies the argument in future extension. To apply the Ascoli-Arzel\`{a} theorem in our argument, we first prove the uniform Lipschitz property of viscosity solutions. Then, we verify that the obtained limit function is a viscosity solution. We will discuss this in two cases, $\bar{x} \in \bigcup_{l=0}^{\infty}\mathcal{N}^{l}$ and $\bar{x} \in \overline{\bigcup_{l=0}^{\infty}\mathcal{N}^{l}}\setminus \bigcup_{l=0}^{\infty}\mathcal{N}^{l}$. Here, $\bar{x}$ is a touching point of the test function to the limit function.
In the case of $\bar{x} \in \bigcup_{l=0}^{\infty}\mathcal{N}^{l}$, we prove it directly. In the case of $\bar{x} \in \overline{\bigcup_{l=0}^{\infty}\mathcal{N}^{l}}\setminus \bigcup_{l=0}^{\infty}\mathcal{N}^{l}$, we prove it by using Proposition \ref{equivalence}. Proposition \ref{equivalence} is useful for indicating that a function is a viscosity solution. This is because Proposition \ref{equivalence} shows that we only need to consider the test function in the form of the square of the distance function.
$\mathcal{N}$ is a required space on which viscosity solutions can be considered. Based on conditions (1), (2), and (3) of Theorem \ref{main}, which is presented below, we deduce that $\mathcal{N}=\overline{\bigcup_{n=0}^{\infty}\mathcal{N}^{n}}$ and $\mathcal{N}$ is a compact, complete geodesic space. Thus, viscosity solutions in the sense of \cite{GS2015} may be considered on this space.
We also consider the viscosity solutions in each graph \cite{GS2015}. The notion of the viscosity solutions of Hamilton--Jacobi equations in metric spaces was formulated in \cite{AF2014,GHN2015,GS2014,GS2015}---they are primarily of two types, and the relationship between them is considered in \cite{LSZ2021}. Furthermore, the asymptotic behavior of solutions to Hamilton-Jacobi equations was studied in \cite{NN2018}.

The main theorem can be written as follows.
\begin{thm}\label{main}
Let $u_{n}$ be a unique viscosity solution of (\ref{Phj1}), (\ref{Phj2}) for $n=0,1,\cdots$.
Assume the following conditions:
\begin{enumerate}
    \item 
For expanding networks $\{\mathcal{N}^{n}\}_{n \in \mathbf{N}_{\ge 0}}$, there exists a compact set $\mathcal{N}$ such that
\begin{equation*}
d_{H}(\mathcal{N}^{n},\mathcal{N}) \to 0,\quad n \to \infty
,\end{equation*}
where $d_{H}$ is the Hausdorff distance on $\mathbf{R}^{d}$.
\item
Distances ${\delta}_{n}$ and $\tilde{d}$ have the following continuity:
For sequences $a_{n}, b_{n} \in \mathcal{N}^{n}$ converging  $a, b \in \mathcal{N}$ with respect to the distance $\tilde{d}$,
\[\delta_{n}(a_{n}, b_{n}) \to \tilde{d}(a,b), n \to \infty.\]
\item
Let $a_{n}$ and $b_{n}$ be sequences on $\mathcal{N}$. If $\lim_{n \to \infty}d_{E}(a_{n}, b_{n})=0$ on $\mathcal{N}$,
$\lim_{n \to \infty}\tilde{d}(a_{n}, b_{n})=0$,
where $\tilde{d}$ is an intrinsic distance on $\mathcal{N}$ from $\mathbf{R}^d$.
\item
$\tilde{d}$ is bounded on $\mathcal{N} \times \mathcal{N}$ (i.e., $\tilde{d}(a,b) \le C$, $a,b \in \mathcal{N}$ for some positive constant $C$).
\end{enumerate}
Then, there exists a viscosity solution $u:[0,T) \times \mathcal{N} \to \mathbf{R}$ of 
(\ref{Shj1}), (\ref{Shj2}) such that,
on each $[0,T) \times \mathcal{N}^{m}$, $u_{n}$$(n \ge m)$ converges to $u$
uniformly.
\end{thm}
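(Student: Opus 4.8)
The plan is to follow the PDE route announced in the introduction, in three stages: uniform estimates, extraction of a limit, and verification that the limit solves the equation. First I would show that the family $\{u_{n}\}$ is uniformly bounded and uniformly Lipschitz in both $t$ and $x$. Since $g$ is Lipschitz on the compact set $\mathcal{N}$ and $g_{n}=g|_{\mathcal{N}^{n}}$, the initial spatial Lipschitz constants are bounded independently of $n$. Because $H(x,p)=\frac{1}{2}|p|^{2}+V(x)$ with $V\in C(\mathcal{N})$ bounded on the compact space $\mathcal{N}$, and because $\tilde{d}$ is bounded by condition (4), the standard comparison and doubling-of-variables estimates for the metric viscosity solutions of (\ref{Phj1}), (\ref{Phj2}) produce constants $L,M$ with $\|u_{n}\|_{\infty}\le M$ and a uniform space-time Lipschitz bound $L$, valid simultaneously on every $\mathcal{N}^{n}$. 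Here the boundedness of $\tilde{d}$ together with the continuity of the distances in condition (2) is what renders these bounds uniform in $n$, since it controls the finite propagation speed associated with the quadratic Hamiltonian.

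Second, I would fix $m$ and apply Arzel\`a--Ascoli on the compact metric space $[0,T]\times\mathcal{N}^{m}$, where the functions $u_{n}$ with $n\ge m$ are equi-Lipschitz and uniformly bounded; a subsequence then converges uniformly. A diagonal argument over $m=0,1,2,\dots$ yields a single subsequence along which $u_{n}$ converges uniformly on every $[0,T]\times\mathcal{N}^{m}$, and the limit is Lipschitz on $[0,T)\times\bigcup_{m}\mathcal{N}^{m}$. By uniform continuity it extends to the closure, and condition (1) forces $\mathcal{N}=\overline{\bigcup_{n}\mathcal{N}^{n}}$, so this extension is the desired candidate $u$ on $[0,T)\times\mathcal{N}$. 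The same estimates give $u(0,\cdot)=g$, so the initial condition (\ref{Shj2}) holds.

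Third, and this is the technical heart, I would verify that $u$ is a viscosity solution of (\ref{Shj1}) in the sense of Gangbo--\'{S}wi\k{e}ch, splitting into the two cases flagged in the introduction. Let $\varphi$ be an admissible test function touching $u$ at $(\bar{t},\bar{x})$ with $\bar{t}\in(0,T)$. If $\bar{x}\in\bigcup_{l}\mathcal{N}^{l}$, then $\bar{x}\in\mathcal{N}^{n}$ for all large $n$; uniform convergence lets me perturb $\varphi$ to obtain touching points $(t_{n},x_{n})\to(\bar{t},\bar{x})$ against $u_{n}$ on $\mathcal{N}^{n}$, read off the viscosity inequalities for $u_{n}$, and pass to the limit. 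Since $H_{n}$ is literally the restriction of $H$ and the relevant metric slopes converge by condition (2), the limiting inequality for $u$ follows using the continuity of $V$. The genuinely delicate case is $\bar{x}\in\overline{\bigcup_{l}\mathcal{N}^{l}}\setminus\bigcup_{l}\mathcal{N}^{l}$, where $\bar{x}$ lies in no finite network and test functions cannot be restricted directly. Here I invoke Proposition \ref{equivalence}: it suffices to test with functions built from the squared intrinsic distance, of the form $\tilde{d}(\cdot,y)^{2}$ plus a smooth time part. For such test functions the viscosity inequalities are expressed purely through $\tilde{d}$, so I approximate $\bar{x}$ and the base point $y$ by sequences $x_{n},y_{n}\in\mathcal{N}^{n}$ with $d_{E}(x_{n},\bar{x})\to 0$. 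Condition (3) upgrades this Euclidean convergence to $\tilde{d}$-convergence, and condition (2) gives $\delta_{n}(x_{n},y_{n})\to\tilde{d}(\bar{x},y)$, so the distance-squared inequalities for $u_{n}$ on $\mathcal{N}^{n}$ pass to the limit and yield the corresponding inequality for $u$ at $\bar{x}$.

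I expect the main obstacle to be exactly this second case: reconciling the metric-slope formulation of the Gangbo--\'{S}wi\k{e}ch solutions on the varying spaces $(\mathcal{N}^{n},\delta_{n})$ with that on the limit $(\mathcal{N},\tilde{d})$ at points appearing only in the limit. Controlling the discrepancy between $\delta_{n}$ and $\tilde{d}$ near such points, and ensuring the approximating touching points $x_{n}$ remain genuinely on $\mathcal{N}^{n}$ while converging in the intrinsic distance, is where conditions (2) and (3) must be used most carefully. Proposition \ref{equivalence} is precisely the device that keeps this step tractable, since it reduces the verification to distance-squared test functions, for which the convergence of intrinsic distances supplied by the hypotheses is exactly what is needed.
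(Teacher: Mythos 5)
Your overall architecture matches the paper's proof almost exactly: uniform Lipschitz and sup bounds for $u_{n}$, Arzel\`a--Ascoli on each compact $[0,T]\times\mathcal{N}^{m}$, extension of the limit to $\mathcal{N}=\overline{\bigcup_{l}\mathcal{N}^{l}}$ by uniform Lipschitz continuity, and a two-case verification in which points of $\overline{\bigcup_{l}\mathcal{N}^{l}}\setminus\bigcup_{l}\mathcal{N}^{l}$ are handled by reducing to test functions of the form $\phi(t)+\frac{l}{2}\tilde{d}^{2}(\bar{a},\cdot)$ and approximating the base point by $a_{n}\in\mathcal{N}^{n}$, with conditions (2) and (3) converting $\delta_{n}$-information into $\tilde{d}$-information. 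However, there is one concrete gap: your compactness argument produces only a \emph{subsequence} converging on each $\mathcal{N}^{m}$ (and a diagonal subsequence over $m$), whereas the theorem asserts convergence of the full sequence $u_{n}$, $n\ge m$. The paper closes this by observing that the value-function representation of $u_{n}$ makes the sequence monotonically decreasing in $n$ (enlarging the network enlarges the class of admissible curves, so the infimum can only decrease); monotonicity plus subsequential uniform convergence forces the whole sequence to converge. You offer no mechanism for this upgrade --- neither monotonicity nor a uniqueness/comparison result on the limit space $(\mathcal{N},\tilde{d})$ that would identify all subsequential limits.

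A second, smaller point: in the case $\bar{x}\in\bigcup_{l}\mathcal{N}^{l}$ you justify restricting the test function to $\mathcal{N}^{n}$ only by noting $\bar{x}\in\mathcal{N}^{n}$ for large $n$. That is not sufficient by itself, because the slope of the restricted function could differ from its slope on $\mathcal{N}$ if new edges accumulate at $\bar{x}$ as $n$ grows; condition (2) of the theorem concerns distances, not slopes of test functions. What actually makes this step work is condition (2) of the definition of expanding networks: an entire ball $B_{r}(\bar{x})$ intersected with $\mathcal{N}^{l}$ stabilizes for $l\ge m$, so the local geometry of $\mathcal{N}^{n}$ and of $\mathcal{N}$ near $\bar{x}$ coincide and the restriction is an admissible subtest function with the same slope. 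The paper's Section 6 discussion of the Sierpinski gasket (where branching points are dense and this stabilization fails for a fixed $r$) shows that this is a genuine issue and not a formality, so you should invoke the stabilization property explicitly at this step.
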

Let us make a few remarks on Theorem \ref{main}. The goal of this study was to construct a viscosity solution based on those in a sequence of finite networks, including fractals. Therefore, the properties of fractals and prefractals characterized by contraction maps (i.e., condition (1) of Theorem \ref{main}) are taken as the basis of the investigation.
Theorem \ref{main} cannot be applied to certain fractals, for example, the Koch curve.
This is because the Koch curve does not admit expanding prefractals.

The remainder of the paper is organized as follows.
In Section 2, we define expanding networks. The definition of the network follows \cite{CS2013}.
In Section 3, we discuss the topology of the limit of expanding networks characterized by the Hausdorff distance.
Section 4 is devoted to proving Theorem \ref{main}. In Section 6, we discuss a supplemental proof of Theorem \ref{main} and further problems.

\section{Definition of expanding networks}
First, we define a network (cf. \cite{CS2013}).
Let $V=\{v_{i}|  i \in I\}$ be a finite set of distinct points in $\mathbf{R}^{d}$. 
Let $\{ \pi_{j}| j\in J\}$ be a finite set of non-self-intersecting line segments satisfying conditions (1)--(4).
For \[\pi_{j}: [0,1] \to \mathbf{R}^{d},\quad j \in J,\]
 let us consider $e_{j}=\pi_{j}((0,1))$, $\bar{e}_{j}=\pi_{j}([0,1])$, and
$E=\{e_{j}| j \in J\}$.
\begin{enumerate}
    \item 
For any $j \in J$, the points $\pi_{j}(0),\pi_{j}(1) \in V.$
    \item
For any $j \in J$, $ \#(\bar{e_{j}}\cap V)=2.$
    \item
For $j, k \in J$, $j\neq k$,
$\bar{e}_{j} \cap \bar{e}_{k} \in V$, $\#(\bar{e}_{j} \cap \bar{e}_{k}) \le 1.$
    \item
For $v, w \in V$, there exists a path connecting $v$ and $w$.
\end{enumerate}
The pair $(V, E)$ is called a graph, and  
$\mathcal{N}=\bigcup_{j \in J}\bar{e}_{j}$
is the network corresponding to $(V,E)$.
Next, we define expanding networks. Let $\{\mathcal{N}^{n}\}_{n \in \mathbf{N}_{\ge 0}}$
be a family of networks
corresponding to a family of graphs $\{(V^{n},E^{n})\}_{n \in \mathbf{N}_{\ge 0}}$. We denote various quantities using the superscript $n$ (e.g., the edge set $E^{n}$ is denoted by $\{ e_{j}^{n}|j\in J^{n}\}$).
\begin{dfn}\label{expanding}
A sequence of networks
$\{\mathcal{N}^{n}\}_{n \in \mathbf{N}_{n \ge 0}}$ is said to be an expanding network if the following conditions are fulfilled.
\begin{enumerate}
\item For any $n \in \mathbf{N}_{\ge 0}$, 
\[\mathcal{N}^{n} \subset \mathcal{N}^{n+1},\quad V^{n} \subset V^{n+1}.\]
\item Under condition (1), the following properties hold for any vertex $v \in \bigcup_{n=0}^{\infty} V^{n}$.
There exist $m \in \mathbf{N}_{\ge 0}$ and $r>0$ such that $v \in V^{m}$ and, for $l \ge m$,
\begin{equation*}
B_{r}(v)\cap \mathcal{N}^{l}
=B_{r}(v)\cap \mathcal{N}^{m},
\end{equation*}
where $B_{r}(v)$ is an open ball with respect to $d_{E}$. By condition (1), $v$ is also a vertex of $V^{l}$.
\end{enumerate}
\end{dfn}
We now provide a simple example of expanding networks.
\begin{ex}For $i \in \mathbf{N}$, set
\[v_{i}=\sum_{k=1}^{i}\frac{1}{2^{k}} \in \mathbf{R}\]
and define $V^{n}=\{v_{i}|i=0,1,\cdots, n\}$, where $v_{0}=0$. Corresponding to $V^{n}$, we define $E^{n}$ as all segments that connect $v_{i}$ and $v_{i+1}$ for $0 \le i < n$. 
We consider $\mathcal{T}^{n}$ as the corresponding network of the family of graphs $\{(V^{n},E^{n})\}_{n \in \mathbf{N}_{\ge 0}} $.  
$\mathcal{T}^{n}$ satisfies the assumptions of Theorem \ref{main}, and their Hausdorff limit is $[0,1]$ in $\mathbf{R}$.
\end{ex}
We can treat the Sierpinski gasket as an example of expanding networks if we replace $r$ with $r_{l}$ in condition (2) of Definition \ref{expanding}.
\begin{ex}\label{sierpinski}
Let $\Gamma=\{a_{1},a_{2},a_{3}\} \subset \mathbf{R}^2$ be the set of vertices of an equilateral triangle in $\mathbf{R}^2$. We define a contraction map $\psi_{i}:\mathbf{R}^2 \to \mathbf{R}^2$ $(i=1,2,3)$ by
\[\psi_{i}(x)=a_{i}+\frac{1}{2}(x-a_{i}).\]
As ${V}^0=\Gamma$, let us define vertex sets $V^{n}$ as follows:
\begin{equation*}
{V}^n=\displaystyle\bigcup_{1\le i_{1},i_{2},\cdots,i_{n}\le 3}\psi_{i_{n}}\circ \cdots \psi_{i_{1}}(\Gamma).
\end{equation*}
Furthermore, we define ${E}^n$ to be the set of all open line segments whose endpoints are $x,y \in {V}^n$ and that are images of $n$-compositions by $\psi_{i_k}$ $(k=0,1,\cdots,n)$ of one side connecting two points of $\Gamma$. Using ${V}^n$ and ${E}^n$, as constructed above, we define the prefractal $\mathcal{S}^n={V}^n\cup {E}^n,n\ge0$ as a network. We remark that $\{\mathcal{S}^{n}\}_{n \in \mathbf{N}_{\ge 0}}$ are expanding networks. It is known that the Sierpinski gasket $\mathcal{S}$
can be constructed by $\mathcal{S}=\overline{\bigcup_{n=0}^{\infty}\mathcal{S}^n}$ (cf. \cite{F1997}).
\end{ex}

\section{Generalization to metric spaces}
In this section, we investigate metric space properties of the limit, $\mathcal{N}$, of expanding networks satisfying the assumption of Theorem \ref{main}.
Let $(X,d)$ be a metric space. Let $a,b \in \mathbf{R}$ and $a<b$.
\begin{dfn}
The length $L(\gamma)$ of a continuous curve $\gamma : [a,b] \to X$ is defined as follows:
\begin{equation*}
L(\gamma):=\sup_{\sigma} \sum_{i=1}^{n-1}d(\gamma(t_{i}),\gamma(t_{i+1})),
\end{equation*}
where $\sigma$, on the right-hand side, moves over the entire partition of $[a,b]$.
We say that a continuous curve $\gamma$ is a rectifiable curve if $\gamma : [a, b]\to X$ satisfies $L(\gamma)<\infty$.

A metric space $(X, d)$ is rectifiably connected if $(X,d)$ satisfies the following properties.
For all $x,y \in X$, there exists a rectifiable curve $\gamma$ satisfying $\gamma(a)=x$ and $\gamma(b)=y$.
A rectifiable curve $\gamma$ satisfying this property is called a rectifiable curve connecting $x$ and $y$.

A rectifiable connected metric space $(X,d)$ is called a length space if $(X,d)$ satisfies the following.
For $x,y \in X$,
\[d(x,y)=\inf_{\gamma}L(\gamma),\]
where $\gamma$ moves over the entire rectifiable curve connecting $x$ and $y$.
\end{dfn}
\begin{rem}
For any $x,y$ in each network $\mathcal{N}^{n}$, we define the distance between them to be
\[\delta_{n}(x,y)=\inf_{\gamma}L(\gamma),\]
where $\gamma$ moves over the entire rectifiable curve in $\mathcal{N}^{n}$ connecting $x$ and $y$.
Then, $(\mathcal{N}^{n},\delta^{n})$ is a compact geodesic space.
\end{rem}
Next, we introduce the Hausdorff distance to discuss the convergence of a family of subsets.
Let $(X,d)$ be a distance space.
Consider $A \subset X$. We define the distance of a point $x \in X$ from set $A$, denoted by $d_{A}$, as follows:
\[ d_{A}(x)=\inf_{y \in A}d(y,x), x \in X. \] 
Using $d_{A}$, we define an $\epsilon$-neighborhood of $A$ as follows: \[N(A,\epsilon)=\{x \in X|d_{A}(x)\le \epsilon\}.\]
\begin{dfn}
For $A,B \subset X$, the Hausdorff distance $d_{H}(A,B)$ is defined as follows:
\[d_{H}(A,B)=\inf\{\epsilon \ge 0|B \subset N(A,\epsilon),
A \subset N(B,\epsilon)\}. \]

Let $\{A_{n}\}_{n=1}^{\infty}$ be a family of subsets of $X$.
We define the closed upper and lower limits of $\{A_{n}\}_{n=1}^{\infty}$ as follows:
\begin{equation*}
\limsup_{n \to \infty}A_{n}=\{ x \in X| \forall \epsilon>0, \# \{k:B_{\epsilon}(x) \cap A_{k}\neq \emptyset\}=\infty \},
\end{equation*}
\begin{equation*}
\liminf_{n \to \infty}A_{n}=
\{ x \in X| \forall \epsilon>0, \#\{k:B_{\epsilon}(x) \cap A_{k}=\emptyset\}<\infty \}.
\end{equation*}

The closed limit $\lim_{n \to \infty}A_{n}$ is defined to be 
\[\lim_{n \to \infty}A_{n}=\liminf_{n \to \infty}A_{n}=\limsup_{n \to \infty}A_{n},\]
if $\liminf_{n \to \infty}A_{n}=\limsup_{n \to \infty}A_{n}$ holds.
\end{dfn}
The following proposition reveals the relationship between the Hausdorff distance and the limit of a family of sets.
\begin{prop}[{\cite[Proposition 4.3.5]{P2005}}]\label{hau.conv.}
Let $\{A_{n}\}_{n=1}^{\infty}$ be a family of subsets of $X$ and $A$ be a bounded subset of $X$.
If
\[d_{H}(A_{n},A) \to 0,\quad n\to \infty,\]
then
\[\lim_{n \to \infty}A_{n}=\overline{A}.\]
\end{prop}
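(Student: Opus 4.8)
The plan is to prove the two set inclusions $\overline{A}\subset\liminf_{n}A_n$ and $\limsup_{n}A_n\subset\overline{A}$, and to combine them with the elementary inclusion $\liminf_{n}A_n\subset\limsup_{n}A_n$, which is immediate from the definitions: if for every $\epsilon>0$ only finitely many indices $k$ satisfy $B_\epsilon(x)\cap A_k=\emptyset$, then for every $\epsilon>0$ infinitely many $k$ satisfy $B_\epsilon(x)\cap A_k\neq\emptyset$. Chaining these yields $\overline{A}\subset\liminf_{n}A_n\subset\limsup_{n}A_n\subset\overline{A}$, so all three sets coincide and the closed limit equals $\overline{A}$. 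The only real content lies in the two nontrivial inclusions, each an $\epsilon$-$\delta$ argument feeding on the two-sided containment encoded in the Hausdorff distance, namely $A\subset N(A_n,\eta)$ and $A_n\subset N(A,\eta)$ whenever $d_H(A_n,A)<\eta$.

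For $\overline{A}\subset\liminf_{n}A_n$, I would fix $x\in\overline{A}$ and $\epsilon>0$, and first choose $a\in A$ with $d(x,a)<\epsilon/3$, using that $x$ lies in the closure of $A$. Next, using $d_H(A_n,A)\to 0$, I would pick $N$ so that $d_H(A_n,A)<\epsilon/3$ for all $n\ge N$; then $A\subset N(A_n,\epsilon/3)$, so $d_{A_n}(a)<\epsilon/3$ and there exists $y\in A_n$ with $d(a,y)<\epsilon/3$. The triangle inequality gives $d(x,y)<2\epsilon/3<\epsilon$, so $B_\epsilon(x)\cap A_n\neq\emptyset$ for every $n\ge N$. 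Since only the finitely many indices below $N$ can fail, and $\epsilon$ was arbitrary, $x\in\liminf_{n}A_n$.

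For $\limsup_{n}A_n\subset\overline{A}$, I would argue by contradiction: suppose $x\in\limsup_{n}A_n$ but $x\notin\overline{A}$, so $\delta:=d_A(x)>0$. Choose $N$ with $d_H(A_n,A)<\delta/3$ for $n\ge N$. By the definition of $\limsup$ applied with $\epsilon=\delta/3$, there are infinitely many indices $k$ with $B_{\delta/3}(x)\cap A_k\neq\emptyset$; select one such $k\ge N$ and a point $y\in A_k$ with $d(x,y)<\delta/3$. Since $A_k\subset N(A,\delta/3)$, there is $a\in A$ with $d(y,a)<\delta/3$, and then $d(x,a)<2\delta/3<\delta=d_A(x)$, contradicting that $d_A(x)$ is the infimum of $d(x,\cdot)$ over $A$. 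Hence $d_A(x)=0$ and $x\in\overline{A}$.

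The argument is essentially bookkeeping, and the main point to get right is the uniform choice of constants, splitting the budget into thirds so that the closure gap, the Hausdorff gap, and the approximating-point gap each absorb one third. The boundedness of $A$ plays no role beyond guaranteeing that the Hausdorff distance is finite, so that the hypothesis $d_H(A_n,A)\to 0$ is meaningful; no compactness of $X$ is needed. I expect the more delicate step to be the $\limsup$ inclusion, since it is the one invoking the containment $A_n\subset N(A,\cdot)$ rather than $A\subset N(A_n,\cdot)$, and where one must verify that selecting a single index $k\ge N$ among the infinitely many available already suffices to force the contradiction.
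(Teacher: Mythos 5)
Your proof is correct. The paper does not actually prove this proposition---it is imported verbatim from \cite[Proposition 4.3.5]{P2005}---so there is no argument in the text to compare against; your chain $\overline{A}\subset\liminf_{n}A_{n}\subset\limsup_{n}A_{n}\subset\overline{A}$ is the standard route, and the only point worth tidying is that the paper's $\epsilon$-neighborhood is defined with $\le$, so from $d_{H}(A_{n},A)<\epsilon/3$ you should first extract some $\eta<\epsilon/3$ with $A\subset N(A_{n},\eta)$ to conclude $d_{A_{n}}(a)\le\eta<\epsilon/3$ before picking the approximating point---a slip your thirds budget absorbs in any case.
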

\begin{prop}
If $\mathcal{N}$ and $\{\mathcal{N}^{n}\}_{n \in \mathbf{N}_{\ge 0}}$ satisfy conditions (1), (3), and (4) of Theorem \ref{main}, then $\mathcal{N}$ is a compact geodesic space with respect to the intrinsic distance.
\end{prop}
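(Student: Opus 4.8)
The plan is to first show that the intrinsic distance $\tilde{d}$ and the Euclidean distance $d_{E}$ induce the same topology on $\mathcal{N}$, to deduce compactness of $(\mathcal{N}, \tilde{d})$ from condition (1), and then to invoke the Hopf--Rinow theorem for length spaces to obtain the geodesic property. The three hypotheses play distinct roles: condition (1) supplies compactness in the Euclidean metric, condition (4) guarantees that $\tilde{d}$ is finite (so that $\mathcal{N}$ is rectifiably connected and $\tilde{d}$ is a genuine metric), and condition (3) is the bridge that transfers the Euclidean topology to the intrinsic one.

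First I would record the elementary inequality $d_{E}(x,y) \le \tilde{d}(x,y)$ for all $x,y \in \mathcal{N}$, which holds because the Euclidean length of any rectifiable curve joining $x$ and $y$ dominates the straight-line distance; in particular the identity map $(\mathcal{N},\tilde{d}) \to (\mathcal{N},d_{E})$ is $1$-Lipschitz, hence continuous. For the reverse direction I would apply condition (3) with the constant sequence $b_{n} \equiv x$: if $d_{E}(x_{n}, x) \to 0$ then $\tilde{d}(x_{n}, x) \to 0$, so the identity map $(\mathcal{N}, d_{E}) \to (\mathcal{N}, \tilde{d})$ is sequentially continuous, and therefore continuous since both are metric spaces. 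Consequently the two metrics are topologically equivalent on $\mathcal{N}$.

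Since condition (1) exhibits $\mathcal{N}$ as a compact subset of $(\mathbf{R}^{d}, d_{E})$, the space $(\mathcal{N}, d_{E})$ is compact, and by the topological equivalence just established $(\mathcal{N}, \tilde{d})$ is compact as well; in particular it is complete and locally compact. It then remains to verify that $(\mathcal{N}, \tilde{d})$ is a length space. This is the standard self-consistency property of an induced intrinsic metric: the length of a curve measured with respect to $\tilde{d}$ coincides with its Euclidean length $L(\gamma)$, so that $\tilde{d}$ is intrinsic with respect to itself, i.e. $\tilde{d}(x,y) = \inf_{\gamma} L_{\tilde{d}}(\gamma)$. I would either cite this fact from metric geometry or verify it directly using $d_{E} \le \tilde{d}$ together with the definition of $\tilde{d}$.

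Having shown that $(\mathcal{N}, \tilde{d})$ is a complete, locally compact length space, the Hopf--Rinow theorem yields that it is a geodesic space, so that between any two points there is a shortest curve realizing $\tilde{d}$. Alternatively, one may argue directly by taking a length-minimizing sequence of curves between two fixed points and extracting, via the Arzel\`{a}--Ascoli theorem (using compactness together with the uniform length bound from condition (4)), a uniformly convergent subsequence whose limit is a geodesic. I expect the main obstacle to be the careful justification of the length-space property, specifically confirming that $\tilde{d}$ is itself a length metric and that minimizing curves stay inside $\mathcal{N}$, including at limit points lying in $\overline{\bigcup_{n} \mathcal{N}^{n}} \setminus \bigcup_{n} \mathcal{N}^{n}$; once the two topologies are identified, compactness and the geodesic conclusion follow in a routine manner.
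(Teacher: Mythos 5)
Your proposal is correct and follows essentially the same route as the paper: the paper also deduces compactness of $(\mathcal{N},\tilde{d})$ by extracting a $d_{E}$-convergent subsequence from condition (1) and upgrading the convergence to $\tilde{d}$ via condition (3), and it obtains the geodesic property from condition (4) by citing a Hopf--Rinow-type result (Lemma 2.11 of Barlow's \emph{Diffusions on fractals}). Your explicit remarks on the inequality $d_{E}\le\tilde{d}$ and on verifying that $\tilde{d}$ is itself a length metric are just a more detailed rendering of the same argument.
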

\begin{proof}
First, the intrinsic distance on $\mathcal{N}$ can be defined by
\[\tilde{d}(x,y)=\inf_{\gamma}L(\gamma),\quad x,y \in \mathcal{N}, \]
where $\gamma$ moves over the entire rectifiable curve connecting $x$ and $y$.
By the definition of $\mathcal{N}$ and (4) in Theorem \ref{main}, $\mathcal{N}$ is a geodesic space.
See Lemma 2.11. in \cite[Lemma 2.11]{B1998}.

Next, we prove that the distance space $(\mathcal{N},\tilde{d})$ is compact.
For this purpose, it is sufficient to prove that 
$(\mathcal{N},\tilde{d})$ is sequentially compact.
Let $\{x_{n}\}_{n \in \mathbf{N}}$ be a sequence of points in $\mathcal{N}$.
Because $\mathcal{N}$ is compact with respect to $d_{E}$,
there exists a sequence $\{n_{j}\}_{j \in \mathbf{N}}$ and a point $x\in \mathcal{N}$ such that
\[d_{E}(x_{n_{j}},x) \to 0,\quad j \to \infty. \]
Using condition (3), we obtain
\[\tilde{d}(x_{n_{j}},x)\to 0,\quad j \to \infty.\]
This proves that $(\mathcal{N}, \tilde{d})$ is compact.

\end{proof}
\section{Viscosity solutions on metric spaces}
Let $(\Omega, d)$ be a complete geodesic space.
Let $x_{0} \in \Omega$ be a fixed point.
In this paper, we use the formulation of viscosity solutions introduced by Gangbo and \'{S}wi\k{e}ch in \cite{GS2015}.
For the sake of convenience, we introduce the viscosity solution of a Hamilton--Jacobi equation with a Hamiltonian of the same type as in (\ref{Hamilt}) and investigate its properties.

An analogous notion for a derivative is proposed for a function on a metric space. Assume $T > 0$. For $v :(0,T)\times \Omega \to \mathbf{R}$, the upper and lower slopes of $v$ at $(t,x) \in (0,T)\times \Omega$ are defined, respectively, by
\begin{gather*}
|\nabla^{+}v(t,x)|= \limsup_{y \to x}\frac{[v(t,y)-v(t,x)]_{+}}{d(y,x)},\\ |\nabla^{-}v(t,x)|= \limsup_{y \to x}\frac{[v(t,y)-v(t,x)]_{-}}{d(y,x)}.
\end{gather*}
The slope of $v$ at $(t,x) \in (0,T)\times \Omega$ is defined by
\[|\nabla v(t,x)|=\limsup_{y \to x}\frac{|v(t,y)-v(t,x)|}{d(y,x)}.\]
 Recall that the positive and negative components, $[f(t,x)]_{+}$ and $[f(t,x)]_{-}$, of the function $f:(0,T)\times \Omega \to \mathbf{R}$ are, respectively, given by
\[[f(t,x)]_{+}=\max\{0,f(t,x)\},\quad [f(t,x)]_{-}=\max\{0,-f(t,x)\}.\]
\begin{dfn}
A function $\phi:(0,T)\times \Omega \to \mathbf{R}$ is said to be a subsolution test function if $\phi$ satisfies the following properties. 
\begin{enumerate}
\item
There exist local Lipschitz functions $\phi_{1},\phi_{2}:(0,T)\times \Omega \to \mathbf{R}$ satisfying $\phi={\phi}_{1}+{\phi}_{2}$.
\item
$|\nabla \phi_1(t,x)|=|\nabla^{-}\phi_1(t,x)|$ holds for all $(t,x) \in (0,T)\times \Omega$. Moreover, $|\nabla \phi_1|$ is continuous on $(0,T) \times \Omega$. 
\item
$\partial_{t}{\phi_1}, \partial_{t}{\phi_2}$ are continuous on $(0,T) \times \Omega$, respectively.
\end{enumerate}
Let $\underline{\mathcal{C}}$ denote the set of all subsolution test functions.

Similarly, $\phi :(0,T)\times \Omega \to \mathbf{R}$ is said to be a supersolution test function if $-\psi \in \mathcal{\underline{C}}$. Let $\overline{\mathcal{C}}$ denote the set of all supersolution test functions.
\end{dfn}
\begin{ex}
Consider $f(x)=|x|$ on $\mathbf{R}$. $|\nabla{f}(x)|=|\nabla^{-}{f}(x)|$ holds for $x\neq 0$ but not for $x=0$. This is because $|\nabla{f}(x)|=1,\,|\nabla^{-}{f}(x)|=0$ for $x=0$.
\end{ex}
Let $H$ be a Hamiltonian of the form
\begin{gather}H:\Omega\times [0,\infty) \to \mathbf{R},\quad
H(x,p)=\frac{1}{2}|p|^2+V(x) \label{gHamilt}
\end{gather}
 with $V \in C(\Omega)$.
In this section, we consider the Hamilton--Jacobi equation:
\begin{gather}
\partial_{t}u(t,x)+H(x,|\nabla u(t,x)|)=0,\,(t,x)\in (0,T)\times \Omega, \label{ghj1}\\
u(0,x)=g(x),\,x\in \Omega. \label{ghj2}
\end{gather}
For the function $f: (0,T) \times \Omega \to \mathbf{R}$, let us define the upper semicontinuous envelope, $f^{*}$, and the lower semicontinuous envelope, $f_{*}$, as follows:
\[f^{*}(t,x)=\limsup_{(s,y)\to (t,x)}f(s,y),\quad f_{*}(t,x)=\liminf_{(s,y) \to (t,x)}f(s,y).\]
\begin{dfn}\label{defvissol}
A locally bounded upper semicontinuous function $u :[0, T)\times \Omega \to \mathbf{R}$ is a viscosity subsolution of (\ref{ghj1}) and (\ref{ghj2}); 
if $u(0,x) \le g(x)$ on $\Omega$ and $u-\phi$ $(\phi \in \mathcal{\underline{C}})$ has a local maximum at $(t,x) \in (0,T)\times \Omega$, then
\[\partial_{t}{\phi}+H(x,|\nabla \phi_1(t,x)|-{|\nabla \phi_{2}(t,x)|}^\ast) \le 0.\]

A locally bounded lower semicontinuous function $u :[0,T)\times \Omega \to \mathbf{R}$ is a viscosity supersolution of (\ref{ghj1}) and (\ref{ghj2});
if $u(0,x) \ge g(x)$ on $\Omega$ and $u-\phi$ $(\phi \in \mathcal{\overline{C}})$ has a local minimum at $(t,x) \in (0,T)\times \Omega$, then
\[\partial_{t}{\phi}+H(x,|\nabla{\phi_1}(t,x)|+{|\nabla{\phi_{2}}(t,x)|}_\ast) \ge 0.\]
\end{dfn}
Next, we introduce the definition of a viscosity solution as described in \cite{N20??}. This definition is used at the end of the proof of Theorem \ref{main}. First, we define the class $\underline{\mathbf{C}}$ $(\overline{\mathbf{C}})$ of test functions.
\begin{dfn}
A function $\phi: (0,T) \times \Omega \to \mathbf{R}$ is said to be a subtest function if $\phi$ satisfies the following conditions. 
\begin{enumerate}
\item There exist local Lipschitz functions $\phi_{1}:\Omega \to \mathbf{R}$ $\phi_{2}: (0,T) \to \mathbf{R}$ such that $\phi=\phi_{1}(x)+\phi_{2}(t)$.
\item $|\nabla \phi_{1}|=|\nabla^{-} \phi_{1}|$ on $\Omega$ and $|\nabla^{-} \phi_{1}|$ is continuous on $\Omega$.
\item
$\phi_{2} \in C(0,T)$.
\end{enumerate}
Let $\underline{\mathbf{C}}$ denote the set of all subtest functions.
Similarly, a function $\phi:(0,T)\times\Omega \to \mathbf{R}$ is said to be a supertest function if $-\phi \in \underline{\mathbf{C}}$.
Let $\overline{\mathbf{C}}$ denote the set of all supertest functions. 
Furthermore, $C^{1,-}$ denotes all functions satisfying condition (2).
Likewise, $C^{1,+}$ denotes all functions satisfying condition (2) in terms of upper slope.
\end{dfn}
\begin{dfn}\label{dfnn}
A locally bounded upper semicontinuous function $u :[0, T)\times \Omega \to \mathbf{R}$ is said to be an s-viscosity subsolution of (\ref{ghj1}) and (\ref{ghj2}) if $ u(0,x) \le g(x)$ on $\Omega$ and $u-\phi$ $(\phi \in \underline{\mathbf{C}})$ has a local maximum at $(t,x)$,
\[\partial_{t}{\phi_{2}}(t)+H(x,|\nabla \phi_{1}(x)|) \le 0.\]

A locally bounded lower semicontinuous function $u :[0,T)\times \Omega \to \mathbf{R}$ is said to be an s-viscosity supersolution of (\ref{ghj1}) and (\ref{ghj2}) if $ u(0,x) \ge g(x)$ on $\Omega$ and $u-\phi$ $ (\phi \in \overline{\mathbf{C}})$ has a local minimum at $(t,x)$,
\[\partial_{t}{\phi_{2}}(t)+H(x,|\nabla \phi_{1}(x)|) \ge 0.\] 

A locally bounded continuous function $u:[0,T)\times \Omega \to \mathbf{R}$ is a viscosity solution of the Hamilton--Jacobi equations (\ref{ghj1}), (\ref{ghj2}) if $u$ is both an s-viscosity subsolution and an s-viscosity supersolution.
\end{dfn}
If a metric space $(\Omega,d)$ is locally compact, 
the definition presented in \cite{N20??} is equivalent to the aforementioned definition of viscosity solutions. The proofs of Proposition \ref{equivalence} and Lemma \ref{maximum prop} can be found in \cite{N20??}.
\begin{prop}\label{equivalence}
Let $(\Omega, d)$ be locally compact. Then, a viscosity subsolution of (\ref{ghj1}), (\ref{ghj2}) is an s-viscosity subsolution of (\ref{ghj1}), (\ref{ghj2}) if and only if an s-viscosity subsolution of (\ref{ghj1}), (\ref{ghj2}) is a viscosity subsolution of (\ref{ghj1}), (\ref{ghj2}).
This proposition holds for supersolutions.
\end{prop}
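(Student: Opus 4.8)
The plan is to prove the equivalence by establishing its two constituent implications separately, since they differ greatly in difficulty. Throughout, the time variable is treated classically, so that the relevant test components are $C^{1}$ in $t$ with continuous derivative; hence the whole subtlety concerns the spatial slopes.

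\emph{A viscosity subsolution is an s-viscosity subsolution.} This is the easy direction, holding by a simple inclusion of test classes. Given $\phi=\phi_{1}(x)+\phi_{2}(t)\in\underline{\mathbf{C}}$ with $u-\phi$ attaining a local maximum at $(\bar{t},\bar{x})$, I would regard $\phi_{1}$ as a function on $(0,T)\times\Omega$ that is constant in $t$ and $\phi_{2}$ as one that is constant in $x$. Then $\phi\in\underline{\mathcal{C}}$: the hypothesis $|\nabla\phi_{1}|=|\nabla^{-}\phi_{1}|$ with $|\nabla\phi_{1}|$ continuous transfers verbatim, while $\partial_{t}\phi_{1}=0$ and $\partial_{t}\phi_{2}$ is continuous. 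Since $\phi_{2}$ is constant in $x$ its spatial slope vanishes, so $|\nabla\phi_{2}|^{\ast}=0$. The viscosity subsolution inequality of Definition \ref{defvissol} therefore collapses to $\partial_{t}\phi_{2}(t)+H(x,|\nabla\phi_{1}(x)|)\le 0$, which is precisely the s-viscosity inequality of Definition \ref{dfnn}.

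\emph{An s-viscosity subsolution is a viscosity subsolution.} This is the substantive direction, and it is where local compactness of $(\Omega,d)$ is used. Here one is handed a genuinely coupled test function $\phi=\phi_{1}+\phi_{2}\in\underline{\mathcal{C}}$ with $u-\phi$ having a local maximum at $(\bar{t},\bar{x})$, while only the s-viscosity inequalities, valid for separated test functions, are available. My plan is: (i) by local compactness, replace the local maximum by a strict maximum over a fixed compact neighbourhood of $(\bar{t},\bar{x})$ by adding a small penalization vanishing at $(\bar{t},\bar{x})$, so that all perturbed maximizers localize near $(\bar{t},\bar{x})$; (ii) decouple the time dependence, using that $\phi$ is $C^{1}$ in $t$, replacing the time part near $(\bar{t},\bar{x})$ by the affine function $\partial_{t}\phi(\bar{t},\bar{x})(t-\bar{t})$ with error controlled by continuity of $\partial_{t}\phi_{1},\partial_{t}\phi_{2}$; (iii) reduce the remaining spatial test function $\phi_{1}(\bar{t},\cdot)+\phi_{2}(\bar{t},\cdot)$ to a single admissible separated spatial test function (which can be taken of squared-distance type, consistent with the intended later use), absorbing the rough perturbation $\phi_{2}$ into the Hamiltonian. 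The last step is carried out by a doubling-of-variables argument in the spatial variable: one bounds the spatial slope of the sum from above by $|\nabla\phi_{1}|-|\nabla\phi_{2}|^{\ast}$ up to a vanishing error, which is exactly what forces both the minus sign and the upper-semicontinuous envelope. Applying the s-viscosity inequality to the resulting test function and passing to the limit, using local compactness to force the auxiliary maximizers to converge to $(\bar{t},\bar{x})$ and continuity of $V$, $|\nabla\phi_{1}|$ and the time derivatives, recovers $\partial_{t}\phi(\bar{t},\bar{x})+H(\bar{x},|\nabla\phi_{1}(\bar{t},\bar{x})|-|\nabla\phi_{2}(\bar{t},\bar{x})|^{\ast})\le 0$.

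The main obstacle is step (iii): in a metric space there is no gradient and no chain rule, so the spatial slope of the perturbation $\phi_{2}$ cannot simply be added to that of the good part $\phi_{1}$. All estimates must be performed at the level of descending and ascending slopes and difference quotients, and it is precisely the doubling argument, needing local compactness to produce the auxiliary spatial maximizers and the envelope $|\nabla\phi_{2}|^{\ast}$ to dominate the uncontrolled spatial variation of $\phi_{2}$ from above, that bridges the coupled Gangbo--\'{S}wi\k{e}ch formulation and the separated s-viscosity formulation. The supersolution statement follows by the symmetric argument, with maxima replaced by minima, the upper envelope $|\nabla\phi_{2}|^{\ast}$ replaced by the lower envelope $|\nabla\phi_{2}|_{\ast}$, and the sign of the perturbation term reversed.
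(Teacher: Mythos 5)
Your proposal is correct and follows essentially the same route as the paper: the easy implication by inclusion of the separated test class into $\underline{\mathcal{C}}$, and the substantive implication by doubling the spatial variable with a $\tfrac{1}{2\epsilon}d(x,y)^{2}$ penalization, using local compactness to produce the auxiliary maximizers, a slope-comparison estimate (Lemma \ref{maximum prop}) to trade $\tfrac{1}{\epsilon}d(x_{\epsilon},y_{\epsilon})$ for $|\nabla\phi_{1}|-|\nabla\phi_{2}|$ up to vanishing errors, and monotonicity and continuity of $H$ to pass to the limit and recover the envelope $|\nabla\phi_{2}|^{\ast}$. The only cosmetic difference is your explicit time-decoupling step, which in the paper is handled automatically by freezing $y=y_{\epsilon}$ in $\phi(t,y)$.
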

To prove Proposition \ref{equivalence}, the next lemma is required. The proof uses the definition of slope and the property of maximum value.
\begin{lem}\label{maximum prop}
    Let $u,v,w :\Omega \to \mathbf{R}$ be locally Lipschitz functions.
    \begin{enumerate}
        \item Suppose $v \in C^{1,-}$. If $u-v$ achieves a local maximum at $\hat{x} \in \Omega$, then $|\nabla u(\hat{x})| \ge |\nabla v(\hat{x})|$.
        \item
        Suppose $u \in C^{1,+}$. If $u-v$ achieves a local maximum at $\hat{x} \in \Omega$, then $|\nabla u(\hat{x})| \le |\nabla v(\hat{x})|$.
        \item Suppose $u \in C^{1,+}$ and $v \in C^{1,-}$. If $u-v+w$ achieves a local maximum at $\hat{x} \in \Omega$, then $||\nabla u(\hat{x})| - |\nabla v(\hat{x})||\le |\nabla w(\hat{x})|$.
    \end{enumerate}
\end{lem}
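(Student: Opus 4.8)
The plan is to read off, from each local-maximum hypothesis, a one-sided pointwise inequality between increments of the functions near $\hat{x}$, and then to upgrade the resulting one-sided slope to a full slope using the defining property of $C^{1,\pm}$. The only elementary tools needed are the trivial bounds $[f]_{+}\le|f|$, $[f]_{-}\le|f|$, and the subadditivity of the positive part, $[a+b]_{+}\le[a]_{+}+[b]_{+}$; since every quantity is a $\limsup_{y\to\hat{x}}$, it suffices to work on the neighborhood of $\hat{x}$ on which the maximum is attained.

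For (1) I would start from the local maximum of $u-v$ at $\hat{x}$, which gives $v(\hat{x})-v(y)\le u(\hat{x})-u(y)$ for $y$ near $\hat{x}$. Hence $[v(y)-v(\hat{x})]_{-}=\max\{0,v(\hat{x})-v(y)\}\le\max\{0,u(\hat{x})-u(y)\}\le|u(y)-u(\hat{x})|$; dividing by $d(y,\hat{x})$ and taking $\limsup_{y\to\hat{x}}$ yields $|\nabla^{-}v(\hat{x})|\le|\nabla u(\hat{x})|$, and since $v\in C^{1,-}$ means $|\nabla v(\hat{x})|=|\nabla^{-}v(\hat{x})|$ this is the claim. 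Part (2) is the mirror image: the same maximum gives $u(y)-u(\hat{x})\le v(y)-v(\hat{x})$, so $[u(y)-u(\hat{x})]_{+}\le|v(y)-v(\hat{x})|$, whence $|\nabla^{+}u(\hat{x})|\le|\nabla v(\hat{x})|$, and $u\in C^{1,+}$ converts the left side to $|\nabla u(\hat{x})|$.

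For (3) I would combine both one-sided estimates, now carrying the extra term $w$. The local maximum of $u-v+w$ at $\hat{x}$ gives $u(y)-u(\hat{x})\le(v(y)-v(\hat{x}))+(w(\hat{x})-w(y))$. Applying $[\cdot]_{+}$ and subadditivity of the positive part yields $[u(y)-u(\hat{x})]_{+}\le|v(y)-v(\hat{x})|+|w(y)-w(\hat{x})|$, and since $\limsup$ is subadditive and $u\in C^{1,+}$, I obtain $|\nabla u(\hat{x})|\le|\nabla v(\hat{x})|+|\nabla w(\hat{x})|$. Rewriting the same pointwise inequality as $v(\hat{x})-v(y)\le(u(\hat{x})-u(y))+(w(\hat{x})-w(y))$ and applying $[\cdot]_{-}$ together with $v\in C^{1,-}$ gives the symmetric bound $|\nabla v(\hat{x})|\le|\nabla u(\hat{x})|+|\nabla w(\hat{x})|$. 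Together these two inequalities are exactly $||\nabla u(\hat{x})|-|\nabla v(\hat{x})||\le|\nabla w(\hat{x})|$.

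The step requiring the most care is matching the sign of the accessible one-sided slope to the correct hypothesis: the maximum condition controls the increment of $v$ only from below (its descent slope) and the increment of $u$ only from above (its ascent slope), so only $|\nabla^{-}v(\hat{x})|$ and $|\nabla^{+}u(\hat{x})|$ can be bounded directly. This is precisely why (1) needs $v\in C^{1,-}$, (2) needs $u\in C^{1,+}$, and (3) needs both; the $C^{1,\pm}$ hypotheses are exactly the mechanism that upgrades the one-sided slope to the full slope. I note that no compactness or completeness of $\Omega$ enters---the argument is purely local at $\hat{x}$ and reduces to manipulating difference quotients.
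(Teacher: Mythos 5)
Your proof is correct, and it follows exactly the route the paper indicates: the paper omits the argument (deferring it to the reference by Nakayasu) but states that ``the proof uses the definition of slope and the property of maximum value,'' which is precisely your strategy of extracting one-sided increment inequalities from the local maximum and upgrading the accessible one-sided slopes via the $C^{1,\pm}$ hypotheses. Your closing remark correctly identifies why each sign hypothesis is needed, and the local Lipschitz assumption keeps all slopes finite so the subadditivity of $\limsup$ in part (3) is legitimate.
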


\begin{proof}[Proof of Proposition \rm{\ref{equivalence}}]
We only prove it for viscosity subsolutions. We do not mention the initial value condition because it is clear. If $u$ is an s-viscosity solution, it is clear that it is a viscosity solution; therefore, we want to show the reverse. We prove that $u$ is a viscosity subsolution and that $u-\phi$ and $\phi=\phi_{1}+\phi_{2} \in \underline{C}$ have a local maximum at $(\hat{t}, \hat{x}) \in (0,T) \times \Omega$. Consider a function $U$,
\[u(t,x) -\phi(t,y)-\frac{1}{2\epsilon} d(x,y)^{2}-\frac{\alpha}{2}d(\hat{x},y)^2-\frac{\alpha}{2}|t-\hat{t}|^2\],
where $\epsilon>0$, $\alpha>0$.
By local compactness, $U$ achieves a local maximum at some $(t_{\epsilon},x_{\epsilon},y_{\epsilon})$ in a compact neighborhood of $(\hat{t},\hat{x},\hat{x})$.
We can deduce that
\begin{align*}
&u(t_{\epsilon},x_{\epsilon}) -\phi(t_{\epsilon},y_{\epsilon})-\frac{1}{2\epsilon} d(x_{\epsilon},y_{\epsilon})^{2}-\frac{\alpha}{2}d(\hat{x},y_{\epsilon})^2-\frac{\alpha}{2}|t_{\epsilon}-\hat{t}|^2 \\
&\ge u(\hat{t},\hat{x})-\phi(\hat{t},\hat{x})\\
&\ge u(t_{\epsilon}, x_{\epsilon})-\phi(t_{\epsilon}, x_{\epsilon}).
\end{align*}
From this inequality, we obtain
\begin{equation}\label{evaluation1}
\frac{1}{2\epsilon} d(x_{\epsilon},y_{\epsilon})^{2}+\frac{\alpha}{2}d(\hat{x},y_{\epsilon})^2+\frac{\alpha}{2}|t_{\epsilon}-\hat{t}|^2 \le L d(x_{\epsilon},y_{\epsilon}),
\end{equation}
where $L$ is a Lipschitz constant of $\phi$.
By (\ref{evaluation1}), we obtain $(t_{\epsilon}, x_{\epsilon}, y_{\epsilon}) \to (t,x,x)$ as $\epsilon \to \infty$.
Because $u$ is a viscosity subsolution,
\[\partial_{t}\phi(t_{\epsilon}, y_{\epsilon})+\alpha (t_{\epsilon}-\hat{t})
+H(x_{\epsilon},\frac{1}{\epsilon}d(x_{\epsilon},y_{\epsilon})) \le 0.\]

Next, by fixing $x_{\epsilon}$ of $U$, we have
\[||\nabla \phi_{1}(t_{\epsilon},y_{\epsilon})|-\frac{1}{\epsilon}d(x_{\epsilon},y_{\epsilon})| \le |\nabla \phi_{2}(t_{\epsilon},y_{\epsilon})|+\alpha d(\hat{x}, y_{\epsilon}).\]
From our assumption of $H$, it follows that 
\begin{align*}
&\partial_{t}\phi(t_{\epsilon}, y_{\epsilon})+\alpha (t_{\epsilon}-\hat{t}) \\
&+H\left(x_{\epsilon}, |\nabla \phi_{1}(t_{\epsilon},y_{\epsilon})|-|\nabla \phi_{2}(t_{\epsilon},y_{\epsilon})|-\alpha d(\hat{x},y_{\epsilon})\right) \le 0.
\end{align*}
By letting $\epsilon \to 0$, we complete the proof.
\end{proof}
For (\ref{ghj1}) and (\ref{ghj2}), the comparison result is proved in \cite{GS2015}. In the following proposition, we do not assume the local compactness of $(\Omega,d)$.
\begin{prop}[{\cite[Theorem 4.2]{GS2015}}]\label{comparison} 
Let us assume that the viscosity subsolution and supersolution, $u$ and $v$, of {(\ref{ghj1}), (\ref{ghj2})} satisfy the following:
\begin{enumerate}
\item
For any bounded set $K$,
\begin{equation}
\lim_{t \to 0}\sup_{x \in K}([u(t,x)-g(x)]_{+}+[v(t,x)-g(x)]_{-})=0.\label{comp1}
\end{equation}
\item
The conditions,
\begin{gather}
\lim_{d(x,x_{0}) \to +\infty} \sup_{t \in [0,\infty)} \frac{u(t,x)}{1+d(x,x_{0})^2} \le 0, \notag \\
\lim_{d(x,x_{0}) \to +{\infty}}\sup_{t \in [0,\infty)} \frac{-v(t,x)}{1+d(x,x_{0})^2} \le 0. \label{comp2} 
\end{gather}
hold.
\end{enumerate}
Then, $u \le v$ on $(0,T) \times \Omega$.
\end{prop}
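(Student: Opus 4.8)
The plan is to argue by contradiction with the Crandall--Ishii doubling-of-variables method, recast for the slope formulation of Definition~\ref{defvissol}. Assume $M:=\sup_{(0,T)\times\Omega}(u-v)>0$. To obtain a strict inequality and to keep the eventual maximum away from $t=T$, I would work with $u$ replaced by $u(t,x)-\tfrac{\eta}{T-t}$ and with the penalized functional
\[
\Phi(t,s,x,y)=u(t,x)-v(s,y)-\frac{1}{2\epsilon}d(x,y)^2-\frac{1}{2\sigma}(t-s)^2-\beta\big(d(x,x_0)^2+d(y,x_0)^2\big)-\frac{\eta}{T-t},
\]
where the parameters are sent to $0$ in the order $\beta$, then $\sigma$, then $\epsilon$ (with $\eta$ last).

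The hypotheses enter as follows. The growth control \eqref{comp2} makes the $\beta$-term dominate $u$ and $-v$ at spatial infinity, so $\Phi$ is bounded above and its near-maximizers are spatially localized; the initial-layer estimate \eqref{comp1} together with $M>0$ forces, for small parameters, $\sup\Phi>0$ and hence a (near-)maximizer $(\bar t,\bar s,\bar x,\bar y)$ with $\bar t$ bounded away from $0$ and, by the $\tfrac{\eta}{T-t}$ term, from $T$. The usual penalization estimates give $\tfrac{1}{2\epsilon}d(\bar x,\bar y)^2\to0$ and $\tfrac{1}{2\sigma}(\bar t-\bar s)^2\to0$, while $\beta\,d(\bar x,x_0)^2$ and $\beta\,d(\bar y,x_0)^2$ stay bounded; in particular $d(\bar x,\bar y)\to0$ and, by continuity of $V$, $V(\bar x)-V(\bar y)\to0$.

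Freezing $(\bar s,\bar y)$, the map $(t,x)\mapsto \tfrac{1}{2\epsilon}d(x,\bar y)^2+\tfrac{1}{2\sigma}(t-\bar s)^2+\beta d(x,x_0)^2+\tfrac{\eta}{T-t}$ is a subsolution test function whose $\phi_1$-part is $\tfrac{1}{2\epsilon}d(\cdot,\bar y)^2$: since $\Omega$ is geodesic, $d(\cdot,\bar y)$ has unit slope, so $\phi_1\in C^{1,-}$ with $|\nabla\phi_1(\bar x)|=\tfrac1\epsilon d(\bar x,\bar y)$, the coercive and time terms forming $\phi_2$ with spatial slope $O(\beta\,d(\bar x,x_0))=O(\sqrt\beta)$. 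Definition~\ref{defvissol} then yields
\[
\frac{\bar t-\bar s}{\sigma}+\frac{\eta}{(T-\bar t)^2}+H\Big(\bar x,\tfrac1\epsilon d(\bar x,\bar y)-O(\sqrt\beta)\Big)\le0,
\]
and, symmetrically, freezing $(\bar t,\bar x)$ gives the supersolution inequality
\[
\frac{\bar t-\bar s}{\sigma}+H\Big(\bar y,\tfrac1\epsilon d(\bar x,\bar y)+O(\sqrt\beta)\Big)\ge0.
\]
Combining the two and using $H(x,p)=\tfrac12p^2+V(x)$, the two momenta differ only by $O(\sqrt\beta)$, so their quadratic parts cancel as $\beta\to0$ (for fixed $\epsilon$ the momentum $\tfrac1\epsilon d(\bar x,\bar y)$ is bounded); this leaves $\tfrac{\eta}{(T-\bar t)^2}\le V(\bar y)-V(\bar x)+o(1)$, whence, after $\sigma,\epsilon\to0$ and $d(\bar x,\bar y)\to0$, a contradiction with $\bar t$ bounded away from $T$. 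Letting $\eta\to0$ gives $u\le v$.

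The \emph{main obstacle} is that $(\Omega,d)$ is not assumed locally compact, so $\sup\Phi$ need not be attained and the pointwise inequalities of Definition~\ref{defvissol} cannot be invoked at a genuine maximizer. This is precisely what the slope formulation is built to bypass: in place of compactness I would apply a variational principle of Ekeland (or Borwein--Preiss) type to produce, for each tolerance, an honest maximizer of $\Phi$ perturbed by a small multiple of a distance function, the perturbation adding a slope no larger than the tolerance. The delicate bookkeeping is then to absorb this extra slope into the $O(\sqrt\beta)$ terms and, above all, to keep the two momenta entering $H(\bar x,\cdot)$ and $H(\bar y,\cdot)$ matched to leading order so that the quadratic terms still cancel. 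A secondary point, again rooted in the metric rather than Euclidean setting, is to verify rigorously that $\tfrac{1}{2\epsilon}d(\cdot,\bar y)^2\in C^{1,-}$ with the stated slope, which relies on the geodesic structure furnished by the standing assumptions.
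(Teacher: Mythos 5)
The paper does not prove this proposition at all: it is quoted verbatim from \cite[Theorem 4.2]{GS2015}, so there is no in-paper argument to compare against. Your sketch follows the same strategy as the cited proof --- doubling of variables with the squared-distance penalization playing the role of $\phi_1$, the coercive and time terms playing the role of $\phi_2$, and an Ekeland/Borwein--Preiss perturbation substituting for the missing local compactness --- and you correctly identify the two genuinely delicate points (absorbing the variational-principle perturbation into the $|\nabla\phi_2|$ terms, and checking $\tfrac{1}{2\epsilon}d(\cdot,\bar y)^2\in C^{1,-}$ with slope $\tfrac1\epsilon d(\cdot,\bar y)$, which holds in a geodesic space). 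The only step stated too quickly is the conclusion $V(\bar x)-V(\bar y)\to0$ from $d(\bar x,\bar y)\to0$: mere continuity of $V$ does not suffice once $\beta\to0$ releases the spatial localization, and one needs the uniform-continuity hypotheses on $H$ imposed in \cite{GS2015} (automatic in this paper's application, where $\Omega$ is compact).
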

Some properties of the value function are investigated in \cite{GS2014,GS2015}.
\begin{prop}[{\cite[Theorem 4.8]{GS2015}}]
Let us assume that the initial function $g:\Omega \to \mathbf{R}$ of (\ref{ghj1}), (\ref{ghj2}) is continuous. Then, for $(t,x) \in (0,T) \times \Omega$, a value function,
\begin{equation*}
u(t,x)=\inf_{\sigma}\left\{ \int_{0}^{t}L(x,|\nabla\sigma'|)ds+g(\sigma(0)) :\sigma(t)=x \right\}, \label{value}
\end{equation*}
is a viscosity solution of (\ref{ghj1}) and (\ref{ghj2}), where $L: \Omega \times [0,\infty) \to \mathbf{R}$ is defined by $L(x,v)=\frac{1}{2}v^2-V(x)$ and $\sigma: [0,t] \to \Omega$ moves over the class of absolutely continuous functions. The metric derivative $|\nabla \sigma'(t)|$ is defined as follows:
\begin{equation*}
\lim_{h \to 0} \frac{d(\sigma(t+h),\sigma(t))}{|h|}.
\end{equation*}
An absolutely continuous curve in a metric space is defined to be a curve that admits metric derivatives almost everywhere (cf. \cite{B2020}).
\end{prop}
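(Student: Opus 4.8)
The plan is to exploit the optimal–control structure of the value function and run the classical dynamic programming argument, adapted to the metric setting through slopes. First I would record the \emph{dynamic programming principle}: for $0<s<t$ and $x\in\Omega$,
\[
u(t,x)=\inf_{\sigma}\left\{\int_{s}^{t}L(\sigma(r),|\nabla\sigma'(r)|)\,dr+u(s,\sigma(s)):\sigma(t)=x\right\},
\]
which follows by splitting an admissible curve at the intermediate time $s$ and using additivity of the action together with the definition of $u$ as an infimum. Alongside this I would establish the regularity needed to even speak of a solution---local boundedness and continuity of $u$---by comparing arbitrary competitors with the constant curve $\sigma\equiv x$ and using a coercivity estimate showing that near–optimal curves cannot travel far in short time. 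It then suffices to verify the s–viscosity sub- and supersolution inequalities of Definition \ref{dfnn} for separated test functions $\phi=\phi_{1}(x)+\phi_{2}(t)$; by Proposition \ref{equivalence} this is the relevant notion of viscosity solution.

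For the subsolution inequality, suppose $u-\phi$ has a local maximum at $(\hat t,\hat x)$ with $\phi=\phi_{1}+\phi_{2}\in\underline{\mathbf{C}}$. Because $\phi_{1}\in C^{1,-}$, the condition $|\nabla\phi_{1}|=|\nabla^{-}\phi_{1}|$ furnishes a sequence $y_{k}\to\hat x$ with $\phi_{1}(\hat x)-\phi_{1}(y_{k})=|\nabla\phi_{1}(\hat x)|\,d(y_{k},\hat x)+o(d(y_{k},\hat x))$. Fixing a speed $v\ge0$, I would join $y_{k}$ to $\hat x$ by a geodesic---available since $\Omega$ is a geodesic space---traversed at constant speed over a time interval of length $h_{k}=d(y_{k},\hat x)/v\to0$, insert this curve into the dynamic programming principle on $[\hat t-h_{k},\hat t]$, and use the local maximum of $u-\phi$. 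Dividing by $h_{k}$, sending $k\to\infty$, and freezing $V(\sigma(r))\approx V(\hat x)$ by continuity yields $\partial_{t}\phi_{2}(\hat t)+|\nabla\phi_{1}(\hat x)|\,v\le\tfrac12 v^{2}-V(\hat x)$ for every $v\ge0$. Rearranged this reads $\partial_{t}\phi_{2}(\hat t)+V(\hat x)\le\tfrac12 v^{2}-|\nabla\phi_{1}(\hat x)|\,v$; minimizing the right-hand side over $v\ge0$ at $v=|\nabla\phi_{1}(\hat x)|$---the Legendre relation $H(x,p)=\sup_{v}(pv-L(x,v))$---gives $\partial_{t}\phi_{2}(\hat t)+H(\hat x,|\nabla\phi_{1}(\hat x)|)\le0$.

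For the supersolution inequality, suppose $u-\phi$ has a local minimum at $(\hat t,\hat x)$ with $\phi\in\overline{\mathbf{C}}$. Here I would instead select, for each small $h$, a curve $\sigma_{h}$ that is near–optimal for the dynamic programming principle on $[\hat t-h,\hat t]$ and combine the resulting inequality with the local minimum of $u-\phi$. A coercivity estimate---the action controls $\ell_{h}^{2}/h$, where $\ell_{h}$ is the length of $\sigma_{h}$, while near–optimality bounds the action by $O(h)$---forces $\ell_{h}=O(h)$ and $\sigma_{h}(\hat t-h)\to\hat x$, whence $\phi_{1}(\hat x)-\phi_{1}(\sigma_{h}(\hat t-h))\le|\nabla\phi_{1}(\hat x)|\,\ell_{h}+o(h)$ using only $|\nabla^{-}\phi_{1}|\le|\nabla\phi_{1}|$ and $d\le\ell_{h}$. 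Bounding the kinetic term from below by Jensen's inequality, $\int_{\hat t-h}^{\hat t}|\nabla\sigma_{h}'|^{2}\,dr\ge\ell_{h}^{2}/h$, and then using $|\nabla\phi_{1}(\hat x)|\,\ell_{h}-\ell_{h}^{2}/(2h)\le\tfrac{h}{2}|\nabla\phi_{1}(\hat x)|^{2}$, dividing by $h$ and letting $h\to0$ produces $\partial_{t}\phi_{2}(\hat t)+\tfrac12|\nabla\phi_{1}(\hat x)|^{2}+V(\hat x)\ge0$, which is the required inequality.

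Finally I would verify the initial condition: the constant curve gives $u(t,x)\le g(x)-tV(x)\to g(x)$, while the coercivity estimate keeps near–optimal curves close to $x$ for small $t$, giving $u(t,x)\ge g(x)-o(1)$; together these yield $u(0,\cdot)=g$ and the one-sided limits in Definition \ref{dfnn}. I expect the supersolution step to be the main obstacle: unlike the Euclidean case there are no tangent vectors or Euler--Lagrange equations, so the whole argument must pass through slopes and through quantitative control of near–optimal curves. Establishing $\ell_{h}=O(h)$ and the concentration $\sigma_{h}(\hat t-h)\to\hat x$---which rests on the coercivity of $L$ in the velocity variable and on local bounds for $u$---is the delicate point, as is guaranteeing that near–optimal curves actually exist (or can be chosen within $o(h)$ of the infimum) in the general metric setting where compactness of competitor families is not automatic.
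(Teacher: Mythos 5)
The paper does not prove this proposition at all: it is quoted verbatim from Gangbo--\'{S}wi\k{e}ch \cite[Theorem 4.8]{GS2015} and used as a black box (its only role here is to guarantee that the solutions $u_n$ exist and are given by value functions, so that monotonicity in $n$ and the initial condition can be read off). Your verification argument --- dynamic programming principle, constant-speed geodesics as test curves for the subsolution inequality, near-optimal curves plus Jensen and completion of the square for the supersolution inequality, and Legendre duality $H(x,p)=\sup_{v\ge 0}(pv-L(x,v))$ --- is the standard route and is essentially the strategy of the cited source, so this is a correct reconstruction rather than a genuinely different proof. Three points deserve more care than your sketch gives them. First, the coercivity estimate you invoke yields only $\ell_h=O(\sqrt{h})$ from local boundedness of $u$; the stronger claim $\ell_h=O(h)$ requires Lipschitz continuity of $u$, hence Lipschitz $g$, whereas the proposition assumes only continuous $g$. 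Fortunately $\ell_h\to 0$ is all the argument actually needs, because the slope error $\epsilon'$ in $\phi_1(\hat x)-\phi_1(\sigma_h(\hat t-h))\le(|\nabla\phi_1(\hat x)|+\epsilon')\ell_h$ is absorbed by completing the square into $-\tfrac{h}{2}(|\nabla\phi_1(\hat x)|+\epsilon')^2$, which survives division by $h$; you should say this explicitly rather than rely on $\ell_h=O(h)$. Second, on a general complete geodesic space $\Omega$ the infima need not be finite and near-minimizers need not stay localized unless one imposes lower bounds or growth conditions on $g$ and $V$ of the type appearing in (\ref{comp1})--(\ref{comp2}); in the paper's application $\Omega$ is a compact network or its Hausdorff limit, so this is automatic, but as stated for general $\Omega$ the hypothesis ``$g$ continuous'' alone is not quite enough and the cited theorem in \cite{GS2015} carries such extra assumptions. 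Third, in the subsolution step the sequence $y_k$ realizing $|\nabla^-\phi_1(\hat x)|$ gives the expansion $\phi_1(\hat x)-\phi_1(y_k)=|\nabla\phi_1(\hat x)|d(y_k,\hat x)+o(d(y_k,\hat x))$ only after splitting the cases $|\nabla^-\phi_1(\hat x)|>0$ and $=0$; and the degenerate speed $v=0$ should be treated by the constant curve or by a limit $v\to 0^+$. None of these is a gap in the idea, but each is exactly the kind of step that must be written out for the proof to stand on its own in the metric setting.
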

Finally, we prove a proposition used in the proof of Theorem \ref{main}. We assure the local compactness of $(\Omega, d)$. 
\begin{prop}\label{equilem}
Let $u: (0,T) \times \Omega \to \mathbf{R}$ be an upper semicontinuous  function.
Conditions (1) and (2) are equivalent if the following hold.
\begin{enumerate}
    \item
    $u-\phi$ $(\phi \in \underline{\mathbf{C}})$ has a local maximum at $(t,x)$,
     \[\partial_{t}{\phi_{2}}(t)+H(\bar{x},|\nabla \phi_{1}(x)|) \le 0.\]
    \item
    For $\hat{a}, \hat{x} \in \Omega$ and $\hat{t} \in (0,T)$, if $u(t,x)-\phi(t)-\frac{l}{2}d^{2}(\hat{a},x)$ takes a local maximum at $(\hat{t},\hat{x})$, then
    \[\partial_{t}\phi(\hat{t})+H(\hat{x},l d(\hat{a},\hat{x}))\le 0,\]
    where $\phi \in C^{1}(0,T)$ and $l>0$.
\end{enumerate}
\end{prop}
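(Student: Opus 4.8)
The plan is to prove the two implications separately. The implication $(1)\Rightarrow(2)$ is immediate once one knows that squared distance functions are legitimate spatial test functions, whereas $(2)\Rightarrow(1)$ requires a doubling-of-variables argument in the spatial variable, modeled on the proof of Proposition \ref{equivalence}. For $(1)\Rightarrow(2)$, I would first record that in the geodesic space $\Omega$ the function $\phi_{1}(\cdot):=\frac{l}{2}d^{2}(\hat a,\cdot)$ belongs to $C^{1,-}$ with continuous slope $|\nabla\phi_{1}(z)|=l\,d(\hat a,z)$ for every $z\in\Omega$. The upper bound $|\nabla\phi_{1}(z)|\le l\,d(\hat a,z)$ follows from $|d^{2}(\hat a,y)-d^{2}(\hat a,z)|\le d(y,z)\,(d(\hat a,y)+d(\hat a,z))$, and the matching lower bound, together with the identity $|\nabla\phi_{1}|=|\nabla^{-}\phi_{1}|$, is obtained by letting $y$ run along a geodesic toward $\hat a$, which realizes the maximal rate of decrease. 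Hence, if $u(t,x)-\phi(t)-\frac{l}{2}d^{2}(\hat a,x)$ has a local maximum at $(\hat t,\hat x)$, then $\phi_{1}+\phi\in\underline{\mathbf{C}}$, and applying $(1)$ with $|\nabla\phi_{1}(\hat x)|=l\,d(\hat a,\hat x)$ produces exactly the inequality asserted in $(2)$.

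For $(2)\Rightarrow(1)$, suppose $u-\phi_{1}-\phi_{2}$ attains a local maximum at $(t,x)$ with $\phi_{1}+\phi_{2}\in\underline{\mathbf{C}}$ (and $\phi_{2}\in C^{1}$, as is implicit in both conditions). I would maximize, over a compact neighborhood of $(t,x,x)$, the penalized function
\begin{equation*}
U(s,z,y)=u(s,z)-\phi_{2}(s)-\phi_{1}(y)-\frac{1}{2\epsilon}d^{2}(z,y)-\frac{\alpha}{2}d^{2}(x,y)-\frac{\alpha}{2}|s-t|^{2}.
\end{equation*}
Local compactness yields a maximizer $(s_{\epsilon},z_{\epsilon},y_{\epsilon})$, and the standard penalization estimate forces $(s_{\epsilon},z_{\epsilon},y_{\epsilon})\to(t,x,x)$ and $\frac{1}{\epsilon}d^{2}(z_{\epsilon},y_{\epsilon})\to0$; for $\epsilon$ small the maximizer is interior, so each frozen map has a genuine local maximum. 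Freezing $y=y_{\epsilon}$, the map $(s,z)\mapsto u(s,z)-\big(\phi_{2}(s)+\frac{\alpha}{2}|s-t|^{2}\big)-\frac{1}{2\epsilon}d^{2}(y_{\epsilon},z)$ has a local maximum at $(s_{\epsilon},z_{\epsilon})$, so condition $(2)$ with $\hat a=y_{\epsilon}$ and $l=1/\epsilon$ gives
\begin{equation*}
\partial_{s}\phi_{2}(s_{\epsilon})+\alpha(s_{\epsilon}-t)+H\!\left(z_{\epsilon},\tfrac{1}{\epsilon}d(y_{\epsilon},z_{\epsilon})\right)\le0.
\end{equation*}
Freezing instead $(s,z)=(s_{\epsilon},z_{\epsilon})$, the map $y\mapsto -\phi_{1}(y)-\frac{1}{2\epsilon}d^{2}(z_{\epsilon},y)-\frac{\alpha}{2}d^{2}(x,y)$ has a local maximum at $y_{\epsilon}$; writing $w(y)=\frac{1}{2\epsilon}d^{2}(z_{\epsilon},y)+\frac{\alpha}{2}d^{2}(x,y)$, Lemma \ref{maximum prop}(1) applied to $(-w)-\phi_{1}$ together with the subadditivity of the slope gives
\begin{equation*}
\tfrac{1}{\epsilon}d(z_{\epsilon},y_{\epsilon})+\alpha\,d(x,y_{\epsilon})\ge|\nabla w(y_{\epsilon})|\ge|\nabla\phi_{1}(y_{\epsilon})|.
\end{equation*}

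To conclude, I would combine the two displays: since $H(z,\cdot)$ is nondecreasing, the first remains valid after replacing $\frac{1}{\epsilon}d(y_{\epsilon},z_{\epsilon})$ by $[\,|\nabla\phi_{1}(y_{\epsilon})|-\alpha\,d(x,y_{\epsilon})\,]_{+}$, and then I would let $\epsilon\to0$. Because $y_{\epsilon}\to x$ and the slope $|\nabla\phi_{1}|$ is continuous (which is precisely the defining property of $\underline{\mathbf{C}}$), one has $|\nabla\phi_{1}(y_{\epsilon})|\to|\nabla\phi_{1}(x)|$; together with $z_{\epsilon}\to x$, $s_{\epsilon}\to t$, $\phi_{2}\in C^{1}$, the vanishing of the $\alpha$-terms, and the joint continuity of $H$ and $V$, this yields $\partial_{t}\phi_{2}(t)+H(x,|\nabla\phi_{1}(x)|)\le0$, namely $(1)$. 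I expect the main obstacle to be exactly this slope comparison and limit passage: one must certify that the penalization coefficient $\frac{1}{\epsilon}d(z_{\epsilon},y_{\epsilon})$ is asymptotically bounded below by $|\nabla\phi_{1}(x)|$, for which both the continuity of the slope and the monotonicity of $H$ in the momentum variable are indispensable, while the localizing $\alpha$-terms are harmless because their slopes vanish at the limit point.
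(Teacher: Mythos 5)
Your proposal is correct and follows essentially the same route as the paper: the paper's proof of Proposition \ref{equilem} is a two-line sketch that introduces the same doubled, penalized function and defers to the argument of Proposition \ref{equivalence} for the direction $(2)\Rightarrow(1)$, declaring the converse clear. Your write-up supplies the details the paper omits (the verification that $\frac{l}{2}d^{2}(\hat a,\cdot)\in C^{1,-}$ with slope $l\,d(\hat a,\cdot)$ in a geodesic space for $(1)\Rightarrow(2)$, and the slope comparison via Lemma \ref{maximum prop} for $(2)\Rightarrow(1)$), but the underlying mechanism is identical.
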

\begin{proof}
Basically, the proof is inspired by \cite{N20??}. When the condition of (2) is assured, we can prove the conclusion of (1). Let $\phi \in \underline{\mathbf{C}}$. Consider a function:
\[u(t,x)-\phi(t,y)-\frac{1}{2\epsilon}d(x,y)^2-\frac{\alpha}{2}d(\hat{x},y)^2-\frac{\alpha}{2}|t-\hat{t}|^2.\]
The rest can be done as in the proof of Proposition \ref{equivalence}. The opposite is clear.
\end{proof}
\section{Proof of Theorem \ref{main}}
Before proceeding with the proof of Theorem \ref{main}, we derive a uniform Lipschitz estimate of the viscosity solution $u_{n}$ of (\ref{Phj1}), (\ref{Phj2}). 
Basically, the Lipschitz estimate of the viscosity solution of the Hamilton--Jacobi equation is identical to that in the case of Euclidean space (cf. \cite{ABIL2013}) if we use the basic properties of viscosity solutions in metric space (cf. \cite{GS2014, GS2015}). For the sake of the complexity of the setting and the self-containedness of the paper, we include the proof of the Lipschitz estimate.
Note that the viscosity solutions $u_{n}$ of (\ref{Phj1}) and (\ref{Phj2}) are uniquely given in the form of value functions using Proposition \ref{comparison}.
\begin{prop}\label{lip}
Fix $m \in \mathbf{N}_{\ge 0}$. For $n \ge m$, the viscosity solution $u_{n}$ of (\ref{Phj1}) and (\ref{Phj2}) is uniformly Lipschitz continuous over $[0, T) \times \mathcal{N}^n$. The estimate:
\begin{equation}\label{lipschitz}
|u_{n}(t,y)-u_{n}(t,x)|\le 2K\delta_{n}(x,y)
\end{equation}
holds for $x,y \in \mathcal{N}^{n}$,
where $K$ is a positive constant independent of $n$.
In particular, $u_{n}$ is uniformly Lipschitz continuous on $[0, T) \times \mathcal{N}^m$.
\end{prop}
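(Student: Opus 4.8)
The plan is to exploit the representation of $u_{n}$ as the value function stated above and to prove the spatial estimate (\ref{lipschitz}) directly, by a competitor/modification-of-trajectory argument, rather than through the slope or comparison machinery. Two uniform ingredients are available from the hypotheses. First, since $\mathcal{N}^{n}\subset\mathcal{N}$, every rectifiable curve in $\mathcal{N}^{n}$ is also one in $\mathcal{N}$, so $\tilde{d}(x,y)\le\delta_{n}(x,y)$ for $x,y\in\mathcal{N}^{n}$; hence $g_{n}=g|_{\mathcal{N}^{n}}$ is $K_{0}$-Lipschitz with respect to $\delta_{n}$ with the same constant $K_{0}$ as $g$, uniformly in $n$. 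Second, $|V|\le V_{0}:=\max_{\mathcal{N}}|V|$ on every $\mathcal{N}^{n}$. I will first establish (\ref{lipschitz}) for nearby points and then remove that restriction by chaining along a geodesic, using that each $(\mathcal{N}^{n},\delta_{n})$ is a compact geodesic space.

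The first step is an a priori energy bound for near-optimal trajectories. Fix $(t,x)$ and let $\gamma:[0,t]\to\mathcal{N}^{n}$, $\gamma(t)=x$, be $\varepsilon$-optimal in the value-function representation, and set $E(\gamma)=\tfrac12\int_{0}^{t}|\nabla\gamma'|^{2}\,ds$. Comparing with the constant competitor $\sigma\equiv x$ gives $u_{n}(t,x)\le g_{n}(x)+V_{0}t$, whence
\[
E(\gamma)\le g_{n}(x)-g_{n}(\gamma(0))+V_{0}t+\int_{0}^{t}V(\gamma)\,ds+\varepsilon\le K_{0}\,\delta_{n}(x,\gamma(0))+2V_{0}t+\varepsilon .
\]
Since $\delta_{n}(x,\gamma(0))\le L(\gamma)\le\sqrt{2t\,E(\gamma)}$ by Cauchy--Schwarz, solving the resulting quadratic inequality in $\sqrt{E(\gamma)}$ yields $E(\gamma)\le C't$ with $C'$ depending only on $K_{0}$ and $V_{0}$, hence uniform in $n$.

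The main step builds a competitor for $x$ out of a near-optimal trajectory for $y$. Put $d=\delta_{n}(x,y)$ and assume $d\le t/2$. Let $\gamma$ be $\varepsilon$-optimal for $u_{n}(t,y)$ and let $\beta:[0,d]\to\mathcal{N}^{n}$ be a unit-speed geodesic from $y$ to $x$. Define $\tilde\gamma$ on $[0,t]$ to be the reparametrization $s\mapsto\gamma\!\left(\tfrac{t}{t-d}s\right)$ on $[0,t-d]$, followed by $\beta$ on $[t-d,t]$; then $\tilde\gamma(0)=\gamma(0)$, $\tilde\gamma(t)=x$, and $\tilde\gamma$ is admissible for $u_{n}(t,x)$. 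Accounting for the cost, the reparametrization multiplies the kinetic part of $\gamma$ by $\tfrac{t}{t-d}\le2$ and changes the potential part by at most $V_{0}d$, while $\beta$ contributes $\tfrac12 d$ to the kinetic part and at most $V_{0}d$ to the potential part. Using $E(\gamma)\le C't$ and $d\le t/2$, the cost of $\tilde\gamma$ exceeds that of $\gamma$ by at most $(2C'+2V_{0}+\tfrac12)d$, so $u_{n}(t,x)-u_{n}(t,y)\le(2C'+2V_{0}+\tfrac12)\,\delta_{n}(x,y)+\varepsilon$. Letting $\varepsilon\to0$ and exchanging the roles of $x$ and $y$ gives (\ref{lipschitz}) for $\delta_{n}(x,y)\le t/2$ with the uniform constant $2K:=2C'+2V_{0}+\tfrac12$.

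Finally, for arbitrary $x,y\in\mathcal{N}^{n}$ I will split a geodesic joining them into $\lceil 2\delta_{n}(x,y)/t\rceil$ consecutive arcs, each of $\delta_{n}$-length at most $t/2$, apply the estimate just obtained to successive endpoints at the same time level $t$, and sum, obtaining (\ref{lipschitz}) with the same constant. All constants depend only on $K_{0}$ and $V_{0}$ and not on $n$, which gives the uniform Lipschitz continuity; restricting to $x,y\in\mathcal{N}^{m}\subset\mathcal{N}^{n}$ and using $\delta_{n}\le\delta_{m}$ there yields the final assertion for $n\ge m$. The main obstacle is precisely the factor $\tfrac{t}{t-d}$ produced by the reparametrization, which would blow up as $t\to0$ were it not absorbed by the a priori bound $E(\gamma)\le C't$; this is why the energy estimate is carried out first.
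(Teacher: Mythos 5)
Your argument is correct in its essentials, but it takes a genuinely different route from the paper. The paper never touches the control representation in this proof: it first obtains Lipschitz continuity in $t$ by comparing $u_n$ with $\pm Ct+g_n(x)$ via the comparison principle (Proposition \ref{comparison}), and then derives the spatial bound (\ref{lipschitz}) by the classical viscosity-solution device of maximizing $\Psi(s,y)=u_n(s,y)-u_n(t,x)-2K\sqrt{\delta_n(x,y)^2+\epsilon^2}-|t-s|^2/\alpha^2$ and choosing $K$ so large that the coercivity of $H$ (namely $H_n(x,p)>2C$ for $p\ge K$) forces the maximizer back to $y=x$. You instead work directly with the value function, proving the a priori kinetic-energy bound $E(\gamma)\le C't$ for near-optimal trajectories and then building a competitor by time-rescaling plus an appended geodesic; the restriction $\delta_n(x,y)\le t/2$ is removed by chaining along a geodesic in the compact geodesic space $(\mathcal{N}^n,\delta_n)$. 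Both routes are legitimate here, since the paper explicitly records that $u_n$ is uniquely given as a value function; your version is more elementary and yields explicit constants ($2K=2C'+2V_0+\tfrac12$, depending only on the Lipschitz constant of $g$ and on $\max_{\mathcal{N}}|V|$, hence uniform in $n$), while the paper's version is purely PDE-theoretic and would survive in settings where the solution is not known in advance to admit a variational representation.

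One piece of the statement you do not address: the proposition also asserts Lipschitz continuity in $t$, which is needed when Proposition \ref{prep} applies Ascoli--Arzel\`a (equicontinuity jointly in $(t,x)$), and which the paper proves first in (\ref{pflip1})--(\ref{pflip2}) before feeding it into the spatial argument. Your machinery delivers it with little extra work: the constant competitor already gives $u_n(h,x)\le g_n(x)+V_0h$, the energy bound gives the matching lower bound $u_n(h,x)\ge g_n(x)-(V_0+K_0\sqrt{2C'})h$, and the dynamic programming principle (or the comparison principle, exactly as in (\ref{pflip1})) propagates this to $|u_n(t+h,x)-u_n(t,x)|\le\sup_y|u_n(h,y)-g_n(y)|$. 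You should state and carry out this step rather than prove only the spatial estimate.
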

\begin{proof}
For $h>0$, it follows from Theorem \ref{comparison} that
\begin{equation}
|u_{n}(x,t+h)-u_{n}(x,t)| \le |u_{n}(x,h)-u_{n}(x,0)|,(t,x)\in (0,T)\times \mathcal{N}^n. \label{pflip1}
\end{equation}
For $C>0$, define functions on $\mathcal{N}^n$ as follows:
\[\underline{u}(t,x)=-Ct+g_{n}(x),\,\overline{u}=Ct+g_{n}(x). \]
We show that they are the viscosity subsolution and supersolution of (\ref{Phj1}) and (\ref{Phj2}), respectively, for a large $C$. We only confirm that $\underline{u}$ is a viscosity subsolution. Indeed, for $\psi \in \underline{\mathcal{C}}$, we assume that $\underline{u}-\psi$ admits a maximum value at $(t,x)\in (0,T)\times \mathcal{N}^n$. Because $\psi=\psi_{1}+\psi_{2}$,
\begin{equation*}
\psi_{1}(t,y)-\psi_{1}(t,x)\ge \underline{u}(t,y)-\psi_{2}(t,y)-\left(\underline{u}(t,x)-\psi_{2}(t,x)\right).
\end{equation*}
By this equation and the definition of slope, we obtain
\begin{align*}
&|\nabla{\psi_{1}}(t,x)|=|\nabla^{-}\psi_{1}(t,x)| \notag \\
&\le |\nabla{(\underline{u}-\psi_{2})}(t,x)| \notag \\
&\le |\nabla{\underline{u}}(t,x)|+|\nabla{\psi_{2}}(t,x)| \le C_{1}+{|\nabla{\psi_{2}}(t,x)|}^{\ast},
\end{align*}
where $C_{1}$ is the Lipschitz constant of $g_{n}$. Thus, by the monotonicity and continuity of the Hamiltonian, there exists a constant $C_{2}>0$ such that 
\begin{equation*}
H_{n}(x,|\nabla{\psi_1}(t,x)|-{|\nabla{\psi_{2}}(t,x)|}^\ast)\le \sup_{0\le r \le C_{1},x \in \mathcal{N}^n}H(x,r) \le C_{2} <\infty.
\end{equation*}
If we take $C=C_{2}$, we have
\[\partial{\psi_{t}}+H_{n}(x,|\nabla{\psi_1}(t,x)|-{|\nabla{\psi_{2}}(t,x)|}^\ast) \le -C_{2}+C_{2}=0.\]
Now, this proves that $\underline{u}$ is a viscosity subsolution. Furthermore, by the comparison theorem, $u_{n}$ coincides with the viscosity solution produced by Perron's method. Thus,
\begin{equation}
-Ch+g_{n}(x)\le u_{n}(h,x) \le Ch+g_{n}(x). \label{pflip2}
\end{equation}
By (\ref{pflip1}) and (\ref{pflip2}), Lipschitz continuity for $t$ is obtained.

Consider arbitrary $K>0,\alpha>0,\epsilon>0$ and fix $(t,x)\in (0,T)\times \mathcal{N}^n$. Define $\Psi: [0,T] \times \mathcal{N}^n \to \mathbf{R}$ by
\[\Psi(s,y)=u_{n}(s,y)-u_{n}(t,x)-2K\sqrt{\delta_{n}(x,y)^{2}+\epsilon^2}-\frac{|t-s|^2}{\alpha^2}. \]
This function admits a maximum at an interior point if $\alpha$ is sufficiently small. Assume $K$ does not depend on $n$ s.t. $H_{n}(x,p) >2C$ holds if $|p| \ge K, x \in \mathcal{N}^{n}$. For example, take $K=\sqrt{6C+2\max_{\mathcal{N}} V^{+}}$. Let $(\bar{s}_{\epsilon, \alpha}, \bar{y}_{\epsilon, \alpha})$ be the maximum point. Because $u_{n}$ is a viscosity subsolution, we obtain
\begin{equation}                    
2\frac{(t-\bar{s}_{\epsilon,\alpha})}{\alpha^2}+H_{n}(\bar{y}_{\epsilon, \alpha}, 2K\frac{\delta_{n}(x,\bar{y}_{\epsilon, \alpha})}{\sqrt{\delta_{n}(x,\bar{y}_{\epsilon, \alpha})^2+\epsilon^2}})\le0.\label{pflip3}
\end{equation}
However, by $\Psi(\bar{y}_{\epsilon, \alpha}, t)\le \Psi(\bar{y}_{\epsilon, \alpha},\bar{s}_{\epsilon, \alpha})$ and the Lipschitz continuity for $t$, the following equation holds:
\begin{equation}
|2\frac{(t-\bar{s}_{\epsilon, \alpha})}{\alpha^2}|\le 2C.\label{pflip4}
\end{equation}
From (\ref{pflip3}) and (\ref{pflip4}), we have
\[H_{n}(\bar{y}_{\epsilon, \alpha}, 2K\frac{\delta_{n}(x,\bar{y}_{\epsilon, \alpha})}{\sqrt{\delta_{n}(x,\bar{y}_{\epsilon, \alpha})^2+\epsilon^2}})\le 2C.\]
Considering the construction of $K$, it follows that
\[2K\frac{\delta_{n}(x,\bar{y}_{\epsilon, \alpha})}{\sqrt{\delta_{n}(x,\bar{y}_{\epsilon, \alpha})^2+\epsilon^2}} \le K.\]
By letting $\epsilon \to 0$, we obtain $\bar{y}_{\epsilon, \alpha} \to {x}$.
Conversely, $\bar{s}_{\epsilon, \alpha}=t$ 
.
Now, we have 
\[\Psi(t,y)\le\Psi(t,\bar{y}_{\epsilon, \alpha}).\]
This implies that, as $\epsilon \to 0$,
\begin{equation*}
u_{n}(t,y)-u_{n}(t,x)-2K\delta_{n}(x,y)\le0.
\end{equation*}
If we now replace $x$ and $y$ and repeat the same argument, we obtain Lipschitz continuity on $\mathcal{N}^n$. In particular, by restricting to $\mathcal{N}^m$,
$\delta_{n}(x,y)\le \delta_{m}(x,y)$ 
holds for $x,y \in \mathcal{N}^{m}$. Hence, Lipschitz continuity on $(\mathcal{N}^m,\delta_{m})$ is obtained.
\end{proof}
\begin{rem}\label{comm.lip}
If we admit condition (2) of Theorem \ref{main}, Lipschitz estimate (\ref{lipschitz}) holds for $\tilde{d}$. 
\end{rem}
\begin{prop}\label{boundness}
Fix $m \in \mathbf{N}_{\ge 0}$ and $n \ge m$. The viscosity solution $u_{n}$ of (\ref{Phj1}), (\ref{Phj2}) is uniformly bounded on $[0,T) \times \mathcal{N}^m$.
\end{prop}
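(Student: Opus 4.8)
The plan is to obtain the uniform bound directly from the estimates already established in the proof of Proposition \ref{lip}, namely the Lipschitz-in-time control of $u_{n}$ near the initial slice, combined with the uniform boundedness of the initial data $g_{n}=g|_{\mathcal{N}^{n}}$. The key observation is that the constant $C$ produced in the proof of Proposition \ref{lip} can be chosen independently of $n$, so that the resulting bound is genuinely uniform over $n \ge m$.

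First I would recall from the proof of Proposition \ref{lip} the barrier inequality (\ref{pflip2}), which together with (\ref{pflip1}) and the comparison principle (Proposition \ref{comparison}) yields
\[
|u_{n}(t,x)-g_{n}(x)| \le Ct \le CT, \quad (t,x)\in [0,T)\times \mathcal{N}^{n}.
\]
From this I would conclude
\[
|u_{n}(t,x)| \le CT + \sup_{\mathcal{N}}|g|, \quad (t,x)\in [0,T)\times \mathcal{N}^{n},
\]
and, since $\mathcal{N}^{m}\subset \mathcal{N}^{n}$ for $n\ge m$, this bound holds in particular on $[0,T)\times \mathcal{N}^{m}$. The quantity $\sup_{\mathcal{N}}|g|$ is finite because $g$ is continuous on the compact set $\mathcal{N}$, established in Section 3.

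The only point requiring care—and hence the main obstacle—is verifying that the right-hand side does not depend on $n$. For this I would track the constant $C=C_{2}$ in the proof of Proposition \ref{lip}: it is controlled by $\sup_{0\le r\le C_{1},\, x\in \mathcal{N}^{n}}H(x,r)$, where $C_{1}$ is a Lipschitz constant for $g_{n}$. Since $\mathcal{N}^{n}\subset \mathcal{N}$ gives $\tilde{d}\le \delta_{n}$ on $\mathcal{N}^{n}$, the $\delta_{n}$-Lipschitz constant of $g_{n}=g|_{\mathcal{N}^{n}}$ is bounded above by the $\tilde{d}$-Lipschitz constant of $g$ on $\mathcal{N}$, uniformly in $n$; and since $V\in C(\mathcal{N})$ is bounded on the compact set $\mathcal{N}$, the Hamiltonian $H(x,r)=\frac{1}{2}r^{2}+V(x)$ is bounded on $[0,C_{1}]\times \mathcal{N}$ by a constant independent of $n$. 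This furnishes a uniform $C$, and the uniform bound on $[0,T)\times \mathcal{N}^{m}$ follows. Everything beyond this $n$-independence check is an immediate consequence of the work already done in Proposition \ref{lip}.
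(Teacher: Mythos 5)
Your proof is correct and follows the same route as the paper, whose entire proof is the single sentence ``This is clear from (\ref{pflip2})''; you simply spell out the deduction from the barrier estimate and verify the $n$-independence of the constant $C$, which the paper leaves implicit.
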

\begin{proof}
This is clear from (\ref{pflip2}).
\end{proof}
In the sequel, we extend the range of the validity of uniform Lipschitz continuity and uniform boundedness to $t=T$ to enable the application of the Ascoli-Arzel\`{a} theorem.
\begin{prop}\label{prep}
Fix $m \in \mathbf{N}_{\ge 0}$ and $n \ge m$. 
Then, $u_{n}$ is monotonically decreasing on $[0,T] \times \mathcal{N}^{m}$ with respect to $n$, and
there exists ${u} \in C([0,T] \times \mathcal{N})$ such that $u_{n}$ converges to ${u}$ uniformly on $[0,T] \times \mathcal{N}^{m}$.
\end{prop}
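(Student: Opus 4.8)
The plan is to read off monotonicity from the value-function representation of $u_n$ together with the nesting $\mathcal{N}^n \subset \mathcal{N}^{n+1}$, to upgrade the resulting pointwise limit to a uniform one by Dini's theorem, and then to extend the limit continuously from $\bigcup_m \mathcal{N}^m$ to all of $\mathcal{N}$. For the first step I would use that, by the preceding proposition, each $u_n$ is the value function
\[ u_{n}(t,x) = \inf_{\sigma}\left\{ \int_{0}^{t} L(\sigma(s),|\nabla\sigma'|)\,ds + g_{n}(\sigma(0)) : \sigma(t)=x \right\}, \]
with $\sigma$ ranging over absolutely continuous curves in $\mathcal{N}^n$. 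An admissible curve for $u_n$ lies in $\mathcal{N}^n \subset \mathcal{N}^{n+1}$ and is therefore admissible for $u_{n+1}$; the running cost $L(y,v)=\tfrac12 v^2 - V(y)$ is the same function on every network (the metric derivative along an edge equals the Euclidean speed, hence is unchanged from $\mathcal{N}^n$ to $\mathcal{N}^{n+1}$), and $g_n = g|_{\mathcal{N}^n}$ agrees with $g_{n+1}$ on $\mathcal{N}^n$. Thus the infimum defining $u_{n+1}$ is taken over a larger class with identical integrand and endpoint cost, so $u_{n+1}(t,x) \le u_n(t,x)$ for $(t,x)\in[0,T]\times\mathcal{N}^n$; in particular $\{u_n\}_{n\ge m}$ is monotonically decreasing on $[0,T]\times\mathcal{N}^m$.

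Next I would prove uniform convergence on the compact set $[0,T]\times\mathcal{N}^m$. By Proposition \ref{boundness} (extended to $t=T$) the sequence is uniformly bounded below, so the monotone decreasing sequence has a pointwise limit $u$. The uniform Lipschitz estimate of Proposition \ref{lip}, together with Remark \ref{comm.lip}, gives $|u_n(t,y)-u_n(t,x)| \le 2K\tilde d(x,y)$ and Lipschitz continuity in $t$ with constants independent of $n$; passing to the pointwise limit, $u$ inherits these bounds and is therefore continuous. Since a monotone sequence of continuous functions converging pointwise to a continuous limit on a compact set converges uniformly (Dini's theorem), $u_n \to u$ uniformly on $[0,T]\times\mathcal{N}^m$.

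Finally I would assemble the global limit. For $m'\le m$ the pointwise limits on $\mathcal{N}^{m'}$ and $\mathcal{N}^m$ are computed from the same sequence $\{u_n\}$, hence agree on $\mathcal{N}^{m'}$; this defines $u$ unambiguously on $\bigcup_m \mathcal{N}^m$, where $u(t,\cdot)$ is $2K$-Lipschitz for $\tilde d$ uniformly in $t$. Using $\mathcal{N}=\overline{\bigcup_m\mathcal{N}^m}$, any $x\in\mathcal{N}$ is a $d_E$-limit of points $x_k\in\bigcup_m\mathcal{N}^m$; by condition (3), $d_E(x_k,x_l)\to 0$ forces $\tilde d(x_k,x_l)\to 0$, so $\{x_k\}$ is $\tilde d$-Cauchy and the $\tilde d$-Lipschitz bound makes $\{u(t,x_k)\}$ Cauchy. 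Setting $u(t,x):=\lim_k u(t,x_k)$, well defined by the same bound, extends $u$ to $[0,T]\times\mathcal{N}$, with condition (4) keeping the estimates finite and the joint Lipschitz bounds giving $u\in C([0,T]\times\mathcal{N})$. I expect this last step to be the main obstacle: one has control only on each fixed finite network $\mathcal{N}^m$, yet must manufacture a single continuous function on the possibly fractal limit $\mathcal{N}$, including the points of $\mathcal{N}\setminus\bigcup_m\mathcal{N}^m$. The entire force of conditions (2)--(4), channeled through the $\tilde d$-version of the Lipschitz estimate, is what bridges the Euclidean Hausdorff convergence of the networks with the intrinsic metric in which the solutions are controlled, and one must verify both that the extension is independent of the approximating sequence and that the $d_E$- and $\tilde d$-topologies on $\mathcal{N}$ coincide.
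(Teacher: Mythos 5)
Your proposal is correct and follows essentially the same route as the paper: monotonicity from the value-function representation, uniform convergence on each compact $[0,T]\times\mathcal{N}^m$ via the uniform Lipschitz and boundedness estimates of Propositions \ref{lip} and \ref{boundness}, and extension to all of $\mathcal{N}$ through the $\tilde{d}$-Lipschitz bound of Remark \ref{comm.lip}. The only technical difference is that you obtain uniform convergence by Dini's theorem (monotone pointwise limit plus continuity of the limit inherited from the uniform Lipschitz bounds), whereas the paper first extracts a uniformly convergent subsequence by Ascoli--Arzel\`{a} and then uses monotonicity to upgrade to the full sequence; your justification of the monotonicity and of the extension to $\mathcal{N}\setminus\bigcup_{m}\mathcal{N}^{m}$ is in fact more detailed than the paper's.
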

\begin{proof}
Based on Propositions \ref{lip} and \ref{boundness}, we apply the Ascoli-Arzel\`{a} theorem to $u_{n}$ on $[0,T] \times \mathcal{N}^m$. Then, there exists a Lipschitz function $u:[0,T] \times \mathcal{N}^m \to \mathbf{R}$ such that there exists a subsequence, $u_{n}$, that converges to $u$ uniformly. From the value function representation of $u_{n}$, it is evident that $u_{n}$ is monotonically decreasing on $[0,T] \times \mathcal{N}^m$. It follows that $u_{n}$ converges to $u$ uniformly irrespective of the subsequence. Because $m$ is arbitrarily fixed, ${u}$ is continuous on $[0,T]\times \bigcup_{l=0}^{\infty}\mathcal{N}^{l}$. Then, using the uniform Lipschitz continuity, we should naturally extend $u$ continuously up to $\mathcal{N}$. See Remark \ref{comm.lip}.
\end{proof}
We now provide a proof of Theorem \ref{main}.
\begin{proof}[Proof of Theorem \rm{\ref{main}}]
Let $u$ be defined as in Proposition \ref{prep}.
In view of Proposition \ref{equilem}, it is sufficient to prove that ${u}$ is an s-viscosity solution. Because we can prove that $u$ is an s-viscosity supersolution by a similar argument, we only prove that ${u}$ is an s-viscosity subsolution. The initial condition is satisfied by the form of the value function. Assume that $u-\psi,\psi \in \underline{\mathbf{C}},$ admits a local maximum at $(\overline{t},\overline{x})\in (0,T)\times \mathcal{N}$. Note that \[\mathcal{N}=\overline{\bigcup_{l=0}^{\infty}\mathcal{N}^{l}}.\] First, consider the case $\bar{x} \in \bigcup_{l=0}^{\infty}\mathcal{N}^{l}$. Then, for $\overline{x}$, there exists some $m \in \mathbf{N}_{\ge 0} $ such that $\overline{x} \in {V}^{m}$ or $\overline{x} \in{E}^m$, which is not a vertex of any $\mathcal{N}^{n}$. Because the latter case can be similarly discussed, we only consider the former case. Let $r$ be in (2) of Definition \ref{expanding}. Consider the following domain. For $\kappa>0$ and $n \ge m$, define
\[B_{n}(\overline{t},\overline{x})=B_{\kappa}(\overline{t})\times \left(B_{r}(\overline{x})\cap \mathcal{N}^n\right). \]
Let us define $D_{n}(\overline{x})=B_{r}(\overline{x})\cap \mathcal{N}^n$. Furthermore, let us take $\kappa$, $r$ to be sufficiently small such that ${u}-\psi$ admits a maximum value on $B_{n}(\bar{t},\bar{x})$. 
Let $C$ be the common Lipschitz constant for $u_{n},{u}$ on $\mathcal{N}^m$. For sufficiently small $\epsilon>0$, let us define
\[\Theta_{n}=\sup_{{B}_{n}}\left\{(u_{n}-{u})(t,x)-\eta_{n}^{1}(x)-\eta_{n}^{2}(t)\right\},\]
\[\eta_{n}^{1}(x)=\left(2C+\frac{1}{n}\right)\sqrt{{\delta_{n}(x,\overline{x})}^2+\epsilon},\,\eta_{n}^{2}(t)=\left(2C+\frac{1}{n}\right)\sqrt{(t-\overline{t})^2+\epsilon}. \]
By constructing $\Theta_{n}$ in this manner, $\Theta_{n} \to 0, n \to \infty$ by the uniform convergence of $u_{n}$.
Furthermore, let us define test functions as follows:
\[\psi_{1}^{n}=\psi_{1}(x)+\eta_{n}^{1}(x),\quad \psi_{2}^{n}=\psi_{2}(t)+\eta_{n}^{2}(t).\]
Then, $u_{n}-(\psi_{1}^{n}+\psi_{2}^{n})$ has a maximum at some point $(\hat{t}_{n},\hat{x}_{n})\in {B}_{n}$ and $(\hat{t}_{n},\hat{x}_{n})\to (\bar{t},\bar{x})$ as $n \to \infty$. Note that
\begin{align*}
u_{n}-(\psi_{1}^{n}+\psi_{2}^{n})=
\left(u_{n}-{u}-\eta_{n}^{1}-\eta_{n}^{2}\right)+\left(u-\psi_{1,k(n)}-\psi_{2}\right).
\end{align*}
The left-hand side converges uniformly on $B_{m}(\bar{t},\bar{x})$ to the function that has a maximum value at $(\bar{t},\bar{x})$ by the discussions for $\Theta_{n}$. 
Because $u_{n}$ is an s-viscosity subsolution, we have
\begin{equation}\label{pfmain1}
\partial_{t}\psi_{2}(\hat{t}_{n})+\frac{\hat{t}_{n}-\overline{t}}{\sqrt{(\hat{t}_{n}-\overline{t})^2+\epsilon}}+H_{n}(\hat{x}_{n}, |\nabla \psi_{1}^{n}(\hat{x}_{n})|)\le0.
\end{equation}
By a simple calculation, we obtain
\begin{equation}\label{pfmain2}
|\nabla \psi_{1}^{n}(\hat{x}_{n})| \ge |\nabla \psi_{1}(\hat{x}_{n})|-\frac{\delta_{n}(\hat{x}_{n},\bar{x})}{\sqrt{\delta_{n}(\hat{x}_{n},\bar{x})^2+\epsilon}}.
\end{equation}
By combining (\ref{pfmain1}), (\ref{pfmain2}), and the monotonicity of $H$, we obtain
\begin{align}\label{pfmain3}
&\partial_{t}\psi_{2}(\hat{t}_{n})+\frac{\hat{t}_{n}-\overline{t}}{\sqrt{(\hat{t}_{n}-\overline{t})^2+\epsilon}} \notag\\
&+H_{n}(|\hat{x}_{n}, |(\nabla \psi_{1}(\hat{x}_{n})|-\frac{\delta_{n}(\hat{x}_{n},\bar{x})}{\sqrt{\delta_{n}(\hat{x}_{n},\bar{x})^2+\epsilon}})\le0.
\end{align}
By letting $n \to \infty$, we obtain
\begin{equation}\label{case1}
\partial_{t}{\psi_{2}}(\overline{t})+H(\overline{x},|\nabla \psi_{1}(\bar{x})|)\le0.
\end{equation}
Next, consider the case of $\bar{x} \in \overline{\bigcup_{l=0}^{\infty}\mathcal{N}^{l}}\setminus
\bigcup_{l=0}^{\infty}\mathcal{N}^{l}$.
By Proposition \ref{equilem},
it is sufficient to show that, for $\bar{t}$, $\bar{x}$, and any fixed $\bar{a} \in \mathcal{N}$, if
\[u(t,x)-\phi(t)-\frac{l}{2}\tilde{d}^{2}(\bar{a},x)\]
achieve a local maximum at $(\bar{t}, \bar{x})$, then
\[\partial_{t}\phi(\bar{t})+H(\bar{x}, l \tilde{d}(\bar{a}, \bar{x})) \le 0.\]
We choose a sequence $\{a_{n}\}_{n \in \mathbf{N}_{\ge 0}}$ s.t. $a_{n} \in \mathcal{N}^{n}$ and $a_{n} \to \bar{a}, n \to \infty$. Consider a function $F_{n}(t,x):((0,T) \times \mathcal{N}^{n})\cap G \to \mathbf{R}$,
\[u_{n} (t,x)-\phi(t)-\frac{l}{2}{\delta_{n}}^{2}(a_{n},x),\]
where $G$ is a compact neighborhood of $(\bar{t}, \bar{x})$ in $\mathcal{N}$. Let $n$ be sufficiently large.
The function $F_{n}$ has a maximum at some $(t_{n},x_{n})$ and $(t_{n},x_{n}) \to (\bar{t}, \bar{x})$ with respect to $\tilde{d}$ by assumptions (2) and (3) in Theorem \ref{main}, and if necessary, take a subsequence. Because $u_{n}$ is an s- viscosity subsolution on $\mathcal{N}^{n}$,
\[\partial_{t}\phi(t_{n})+H_{n}(x_{n},l \delta_{n}(a_{n}, x_{n})) \le 0.\]
By letting $n \to \infty$, we obtain a conclusion by assumption (2) in Theorem \ref{main} and the continuity of $H$.
By similarly constructing a test function, we can prove that ${u}$ is an s-viscosity supersolution.
\end{proof}
\begin{rem}
In the proof of Theorem \ref{main}, we can similarly derive (\ref{case1}) when discussing case $\bar{x} \in \overline{\bigcup_{l=0}^{\infty}\mathcal{N}^{l}}\setminus
\bigcup_{l=0}^{\infty}\mathcal{N}^{l}$.
This allows us to show that Theorem \ref{main} holds even if we omit definition (2) of expanding networks. However, for application purposes, it is sufficient to weaken condition (2) regarding $r$ in Definition \ref{expanding} to $r_{l}$.
\end{rem}
\section{Discussions and further problems}
In the previous section, we saw that the first half of the proof does not work by simply approximating the test function when considering the Sierpinski gasket. We then present possible problems.

Now, let us consider part $\bar{x} \in \bigcup_{l=0}^{\infty}\mathcal{N}^{l}$ in the proof of Theorem \ref{main} using the example of the Sierpinski gasket. That is, let $\{\mathcal{N}^{n}\}_{n \in \mathbf{N}_{\ge 0}}$ be prefractals and $\mathcal{N}$ be the Sierpinski gasket. See Example \ref{sierpinski}. In this case, we replace neighborhood $B_{n}(\bar{t},\bar{x})$ in the proof of Theorem \ref{main} with $B_{n}(\bar{t},\bar{x})=B_{\kappa}(\overline{t})\times \left\{B_{r_{n+1}}(\overline{x})\cap \mathcal{N}^n\right\}$. Here, we define $r_{n}=\frac{1}{2^{n+1}}$.
Because the Sierpinski gasket has dense branching points, it is not evident that the regularity condition of the test function is satisfied only because the test function on the Sierpinski gasket is restricted to an open subset of the prefractal in general. This makes it necessary to approximate the test function. A function with regularity can be approximated as follows.

\begin{lem}\label{app}
Let $\psi_{1}$, and $\bar{x}$ be as defined in the proof of Theorem \ref{main}. Then, for $\psi_{1}$, there exists $\psi_{1,k}: B_{n}(\bar{t},\bar{x})\to \mathbf{R}$ such that
$\psi_{1,k}$ converges to $\psi_{1}$ uniformly on $B_{n}(\bar{t},\bar{x})$ and satisfies the condition of a subtest function on $\mathcal{N}^{n}$.
\end{lem}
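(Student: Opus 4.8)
The plan is to turn the assertion into a one-dimensional problem on the finite network $\mathcal{N}^{n}$ and to repair $\psi_{1}$ only near the finitely many vertices at which the subtest condition can fail. Since $\mathcal{N}^{n}$ is a finite network, $D_{n}(\bar{x})=B_{r_{n+1}}(\bar{x})\cap\mathcal{N}^{n}$ contains only finitely many vertices $w_{1},\dots,w_{p}$ of $\mathcal{N}^{n}$, and away from them $D_{n}(\bar{x})$ is locally an interval. At every non-vertex point $\psi_{1}$ is therefore locally a Lipschitz function of a single arclength variable, so there $|\nabla\psi_{1}|=|\nabla^{-}\psi_{1}|$ holds automatically and $|\nabla^{-}\psi_{1}|$ is continuous along each edge; hence condition (2) in the definition of $\underline{\mathbf{C}}$ can break down only at the $w_{i}$. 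Denoting by $\sigma_{i,1},\dots,\sigma_{i,m_{i}}$ the one-sided slopes of $\psi_{1}$ along the edges of $\mathcal{N}^{n}$ issuing from $w_{i}$ (computed with $\delta_{n}$, which near $w_{i}$ agrees with $\tilde{d}$ along each edge), the condition at $w_{i}$ is exactly $\max_{j}[\sigma_{i,j}]_{+}\le\max_{j}[\sigma_{i,j}]_{-}$. This can be violated because the descending directions that enforce $|\nabla\psi_{1}|=|\nabla^{-}\psi_{1}|$ on $\mathcal{N}$ may use the dense branching of the Sierpinski gasket that is absent from $\mathcal{N}^{n}$; the offending $w_{i}$ are precisely the points at which the restricted $\psi_{1}$ presents an ascent-dominant corner, i.e.\ essentially a non-flat local minimum along $\mathcal{N}^{n}$.

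The repair I would use is a localized regularization that rounds such a corner into a first-order trivial minimum. The one-variable model to keep in mind is $\sqrt{t^{2}+1/k^{2}}-1/k\to|t|$ as $k\to\infty$: the smoothed profile converges uniformly to $|t|$ while its minimum has vanishing slope, so that there $|\nabla^{+}|=0\le|\nabla^{-}|$. Choosing $\rho_{k}=1/k$ small enough that the stars $S_{i,k}:=\overline{B_{\rho_{k}}(w_{i})}\cap\mathcal{N}^{n}$ are pairwise disjoint, contained in $D_{n}(\bar{x})$, and free of any vertex other than $w_{i}$, I would leave $\psi_{1}$ unchanged off $\bigcup_{i}S_{i,k}$ and, on each $S_{i,k}$, replace it by a rounded corner depending on $t=\delta_{n}(x,w_{i})$ through the model profile, carrying along each edge the correct slope $\sigma_{i,j}$ and adjusted to match $\psi_{1}$ on the bounding sphere $\{t=\rho_{k}\}$; equivalently one may take the localized inf-convolution $\inf_{y}\{\psi_{1}(y)+\tfrac{k}{2}\delta_{n}(x,y)^{2}\}$, whose semiconcavity produces only downward corners. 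Because the modification is supported in $\bigcup_{i}S_{i,k}$ and $\psi_{1}$ is Lipschitz, $\|\psi_{1,k}-\psi_{1}\|_{\infty}=O(\rho_{k})=O(1/k)$, which yields the required uniform convergence on $B_{n}(\bar{t},\bar{x})$.

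It then remains to verify that $\psi_{1,k}$ meets condition (2) of $\underline{\mathbf{C}}$ on $\mathcal{N}^{n}$. Off $\bigcup_{i}S_{i,k}$ nothing has changed, so the condition holds as above. At $w_{i}$ the rounding makes the minimum first-order trivial, giving $|\nabla^{+}\psi_{1,k}(w_{i})|=0$ and hence $|\nabla\psi_{1,k}(w_{i})|=|\nabla^{-}\psi_{1,k}(w_{i})|$; at interior points of $S_{i,k}$ the profile is smooth in $t$ and such points lie on a single edge, so the ascending and descending slopes coincide and the condition holds by the same one-dimensional reasoning. The semiconcave, downward-corner structure is exactly what Lemma \ref{maximum prop} quantifies, and it is what guarantees $|\nabla\psi_{1,k}|=|\nabla^{-}\psi_{1,k}|$.

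The hard part will be two matching issues. First, one must ensure that $|\nabla^{-}\psi_{1,k}|$ is genuinely continuous on all of $\mathcal{N}^{n}$, in particular across the gluing sphere $\{t=\rho_{k}\}$, where the rounded profile meets the unmodified $\psi_{1}$: the interface slopes must be arranged to be descent-compatible so that no new ascent-dominant point is created there, which is why the profile should match $\psi_{1}$ together with its edgewise slope on $\{t=\rho_{k}\}$ rather than only its value. Second, since in the proof of Theorem \ref{main} the approximants enter with a coupled index $k=k(n)$, the construction must keep the slopes $|\nabla\psi_{1,k}(\hat{x}_{n})|$ controlled so that, after letting $n\to\infty$, the monotonicity and continuity of $H$ still deliver the subsolution inequality at $(\bar{t},\bar{x})$; this forces $\rho_{k}$ to be chosen in tandem with the neighborhood scale $r_{n+1}$, and it is here that the dense branching of the gasket makes a uniform estimate delicate.
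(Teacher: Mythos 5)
Your overall strategy---leave $\psi_{1}$ untouched away from the bad points, perform a local surgery that converts ascent-dominant corners into admissible ones, and let the surgery radius shrink---is the same in spirit as the paper's, which replaces $\psi_{1}$ by a suitably sloped affine function $l^{i}$ on the segments $M_{i,k}=[0,r_{k}]$ issuing from $\bar{x}$ and then mollifies. (In the paper's setting $D_{n}(\bar{x})=B_{r_{n+1}}(\bar{x})\cap\mathcal{N}^{n}$ is just the star of the single vertex $\bar{x}$, so your extra generality with several vertices $w_{1},\dots,w_{p}$ is harmless.) The genuine gap is your opening claim that at every non-vertex point the subtest condition ``holds automatically'' because $\psi_{1}$ is there a Lipschitz function of a single arclength variable, so that ``off $\bigcup_{i}S_{i,k}$ nothing has changed.'' A one-variable Lipschitz function can have an upward kink at an interior point of an edge (model: $t\mapsto|t-t_{0}|$), where $|\nabla\psi_{1}|=1>0=|\nabla^{-}\psi_{1}|$; and even where $|\nabla\psi_{1}|=|\nabla^{-}\psi_{1}|$ does hold, $|\nabla^{-}\psi_{1}|$ need not be continuous (model: $t\mapsto\min(0,t-t_{0})$). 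The hypothesis that $\psi_{1}$ is a subtest function on $\mathcal{N}$ does not rescue this: the slopes there are computed through all of $\mathcal{N}$, including the dense branches absent from $\mathcal{N}^{n}$, so the restriction to an edge of $\mathcal{N}^{n}$ is a priori nothing better than Lipschitz. This is exactly why the paper convolves $\tilde{\psi}_{i}$ with the mollifier $\rho_{r_{k+1}}$ along each segment $L_{i}$: the mollification produces $C^{1}$ regularity in the interior of every edge, where the one-dimensional computation then gives the subtest condition, while the affine piece near the vertex is unchanged by the mollifier and carries the admissible slope configuration. Your construction omits this interior regularization entirely, so the resulting $\psi_{1,k}$ need not lie in $\underline{\mathbf{C}}$.

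A secondary problem: the ``equivalent'' alternative you offer, the inf-convolution $\inf_{y}\{\psi_{1}(y)+\tfrac{k}{2}\delta_{n}(x,y)^{2}\}$, yields semiconcavity and hence $|\nabla|=|\nabla^{-}|$, but semiconcavity does not yield continuity of $|\nabla^{-}|$ (again $\min(0,t)$), which condition (2) in the definition of a subtest function requires; so it is not an equivalent substitute for the explicit rounded profile. On the other hand, the difficulty you flag at the end---that no such approximation controls the convergence of the slopes $|\nabla\psi_{1,k(n)}|$ as $n\to\infty$---is real, but it is also unresolved in the paper, which concedes precisely this point after Lemma \ref{app} and poses it as an open problem; it is not part of what the lemma itself asserts.
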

\begin{proof}
For simplicity, let us assume $\psi_{1}(\bar{x})=0$. Because $\psi_{1}$ is a function concerning spatial variables, it is sufficient to construct $\psi_{1,k}$ in $D_{n}(\bar{x})$. We construct the test function sequentially. Let $N$ denote the degree of the graph at $\bar{x}$.
\begin{enumerate}
\item 
We identify $D_{n}(\bar{x})$ as
\[D_{n}(\bar{x})=\bigcup_{i=1}^{N}L_{i},\]
where $L_{i}=\{ \tau_{i}: 0\le \tau_{i} <r_{n+1} \subset \mathbf{R}_{\ge 0}$, $i \in \{1,\cdots,N\}$, and $\bar{x}$ correspond to the origin $0$ of each $L_{i}$. Furthermore, define $\psi^{i}:L_{i} \to \mathbf{R}$ by $\psi^{i}=\psi|_{L_{i}}$ for each $L_{i}$.
\item
Let $k \ge n+2$. Let $M_{i,k} \subset L_{i}$ be $M_{i,k}=[0,r_{k}]$.
Let us define $O_{k} \subset D_{n}(\bar{x})$ by 
\[O_{k}=\bigcup_{i=1}^{N}M_{i,k}.\]
 Let us now define an affine function $l: D_{n}(\bar{x})\to \mathbf{R}$ that satisfies $l(\bar{x})=0$ on $D_{n}(\bar{x})$ and the condition for a subtest function. For example, we can take the affine function $l^{i}:L_{i} \to \mathbf{R}$ that satisfies the following conditions:
\begin{equation*}
\quad (l^{1})'(+0)\ge 0,\quad (l^{2})'(+0)=(l^{3})'(+0)=(l^{4})'(+0)=-(l^{1})'(+0),
\end{equation*}
\begin{equation*}
l^{1}(0)=l^{2}(0)=l^{3}(0)=l^{4}(0).
\end{equation*}
Define $l:D_{n}(\bar{x}) \to \mathbf{R}$ by
\begin{equation*}
l(x)=l^{i}(x),\quad x \in L_{i}.
\end{equation*}
\item
On each $L_{i}$, define $\tilde{\psi}_{i} :L_{i} \to \mathbf{R}$ by
\begin{equation*}
\tilde{\psi}_{i}=
\begin{cases}
l^{i}(x) & (x \in M_{i,k}),\\
\psi^{i}(x-r_{k})+l^{i}(r_{k})- \psi^{i}(0) & (x \in L_{i} \setminus M_{i,k}).
\end{cases}
\end{equation*}
Using the mollifier $\rho_{k}:=\rho_{r_{k+1}}$, let us define $\psi_{1,k}: D_{n}(\bar{x}) \to \mathbf{R}$ by
\[\psi_{1,k}(x)=\rho_{k}\ast \tilde{\psi}_{i}(x),\, x \in L_{i}. \]
When applying the mollifier, we extend $\tilde{\psi_{i}}$ to $\mathbf{R}$ appropriately.
\end{enumerate}
The sequence of functions $\psi_{1,k}$ converges at each point on $L_{i}$. Let $x \in L_{i}$. We observe
\begin{align}\label{eq1}
|(\psi_{1,k}-\psi^{i})(x)|&= 
\left|\int_{\mathbf{R}} \rho_{k}(y)\tilde{\psi}_{i}(x-y)dy
-\int_{\mathbf{R}}\rho_{k}(y)\psi^{i}(x)dy\right|\notag\\
 &\le
\int_{\mathbf{R}}\rho_{k}(y)|\tilde{\psi}_{i}(x-y)-\psi^{i}(x)|dy\notag\\
 &=
\int_{[-r_{k+1},r_{k+1}]}\rho_{k}(y)|\tilde{\psi}_{i}(x-y)-\psi^{i}(x)|dy.
\end{align}
From $-r_{k+1} \le x-y \le r_{n+1}+r_{k+1}$, we see that
\begin{align*}
&|\tilde{\psi}_{i}(x-y)-\psi^{i}(x)|\\
&\le
\begin{cases}
|l^{i}(x-y)-\psi^{i}(x)|, & (-r_{k+1} \le x-y \le r_{k}).\\
|\psi^{i}(x-\frac{1}{2^{k}})+l^{i}(\frac{1}{2^{k}})-\psi^{i}(0)-\psi^{i}(x)|, & (r_{k} < x-y \le r_{n+1}+r_{k+1}).
\end{cases}
\end{align*}
For $x=0$, we have
\begin{align}\label{eq2}
|\tilde{\psi}_{i}(0-y)-\psi^{i}(0)|&=
|l^{i}(-y)-\psi^{i}(0)|\notag\\
&=
|l^{i}(-y)-l^{i}(0)|+|l^{i}(0)-\psi^{i}(0)|\notag\\
&\le
C^{i}|y|+|l^{i}(0)-\psi^{i}(0)|\notag\\
&\le
C^{i}\cdot r_{k+1},
\end{align}
where $C^{i}$ denotes the Lipschitz constant of $l^{i}$ in $L_{i}$.
For $x\neq 0$, taking sufficiently large $k$ such that $r_{k}<x-y$ for $y \in [-r_{k+1},r_{k+1}]$, we obtain
\begin{align}\label{eq3}
|\tilde{\psi}_{i}(x-y)-\psi^{i}(x)|&=
|\psi^{i}(x-r_{k})+l^{i}(r_{k})-\psi^{i}(0)-\psi^{i}(x)|\notag\\
&\le
C\cdot r_{k}+|l^{i}(r_{k})-\psi^{i}(0)|.
\end{align}
Combining (\ref{eq1}), (\ref{eq2}), and (\ref{eq3}), the pointwise convergence of $\rho_{k}\ast \tilde{\psi}_{i}(x)$ is proved.
By applying the Ascoli-Arzel\'{a} theorem to $\rho_{k}\ast \tilde{\psi}_{i}(x)$, we obtain a uniformly convergent subsequence. Note that the Lipschitz constant of $\tilde{\psi}_{i}(x)$ is independent of $k$. As a result, we obtain a uniformly convergent subsequence of $\rho_{k}\ast \tilde{\psi}_{i}(x)$. We denote it by $\rho_{k}\ast \tilde{\psi}(x)$. When the operation is repeated by taking a subsequence for each $L_{i}$, $\psi_{1,k}$ converges uniformly on $D_{n}$. Because we use the mollifier, $\psi_{1,k}$ satisfies the condition of a subtest function not only at $\bar{x}$ but also on the entire $D_{n}(\bar{x})$.
\end{proof}
By using $\psi_{1,k}$ in Lemma \ref{app}, $\psi_{1}^{n}$ in the proof of Theorem \ref{main} is replaced as follows:
\[\psi_{1}^{n}=\psi_{1,k(n)}(x)+\eta_{n}^{1}(x)+\Theta_{n,k(n)}.\]
Note that we replace $k$ with $k(n)$ to control a maximizer of $\psi_{1,k}$.
A similar method of calculation leaves $|\nabla \psi_{1,k(n)}|$ in the contents of the Hamiltonian in the inequality that corresponds to (\ref{pfmain3}). Unfortunately, the previous approximation does not guarantee that the slope of the function also converges at the same time.
I would therefore like to pose the next question:
\begin{itemize}
    \item In a metric space, provide a method of approximating the function such that the regularity and convergence of the slope is guaranteed.
\end{itemize}

Now let us consider an application of Theorem \ref{main}.
Our original motivation is to adapt Theorem \ref{main} to fractals. However, this comes with certain difficulties. It is not easy to check whether a fractal satisfies assumption (2) of Theorem \ref{main}. In this sense, I would like to present the following problem:
\begin{itemize}
\item What kind of fractals and their approximate sequences satisfy assumption (2) of Theorem \ref{main}?
\end{itemize}
We believe that it holds at least in the case of the Sierpinski gasket.

Finally, we would like to mention a variant of Theorem \ref{main}. It is an interesting proposition to consider spatial stability with respect to other equations, such as elliptic equations. For example, a related study is \cite{H2018}. It is worth mentioning that in viscosity solution theory, as far as we know, a formulation for the solution of the second-order partial differential equation in a general metric space does not exist. Thus, problems still exist regarding the topic of spatial stability.
\section{Acknowledgments}
The author would like to express sincerest gratitude to Professor Nao Hamamuki and Atsushi Nakayasu.
Mr. Keisuke Abiko, Taiga Kurokawa, and Ryo Negishi also provided insightful comments and suggestions.
This work was supported by JST SPRING, Grant Number JPMJSP2119.

\end{document}